\documentclass[letterpaper,11pt]{article}
\usepackage[left=1in,right=1in,top=1in,bottom=1in]{geometry}

\usepackage[T1]{fontenc}
\usepackage{amsmath, amsthm, amssymb, mathtools}
\usepackage{euscript}
\usepackage{bm}
\usepackage{graphicx}
\usepackage{algorithm}
\usepackage{algpseudocode}
\usepackage[numbers,square]{natbib}
\usepackage[colorlinks,linkcolor=blue,citecolor=blue,urlcolor=magenta,linktocpage,plainpages=false]{hyperref}
\usepackage{caption}
\usepackage{subcaption}
\usepackage{enumitem}
\usepackage{microtype}
\usepackage{array}
\usepackage{booktabs}
\usepackage [ short , c2 ]{ optidef }
\allowdisplaybreaks
\usepackage{xargs} 
\usepackage[pdftex,dvipsnames]{xcolor}  
\usepackage[colorinlistoftodos,prependcaption,textsize=tiny]{todonotes}
\newcommandx{\unsure}[2][1=]{\todo[linecolor=red,backgroundcolor=red!25,bordercolor=red,#1]{#2}}

\newcolumntype{x}[1]{>{\centering\arraybackslash\hspace{0pt}}p{#1}}
\usepackage{titlesec}

\newif\ifdarkmode
\darkmodefalse  

\definecolor{darkbg}{HTML}{2b2b2b}
\definecolor{lighttext}{HTML}{e0e0e0}
\definecolor{lightblue}{HTML}{6ba3ff}

\ifdarkmode
  \pagecolor{darkbg}
  \color{lighttext}
  \definecolor{reviewercolor}{HTML}{6ba3ff}
  \definecolor{responsecolor}{HTML}{7ec97e}
  \hypersetup{
    colorlinks=true,
    linkcolor=lightblue,
    urlcolor=lightblue,
    citecolor=lightblue
  }
\else
  \definecolor{reviewercolor}{rgb}{0.0, 0.0, 0.6}
  \definecolor{responsecolor}{rgb}{0.0, 0.5, 0.0}
\fi

\newtheorem{theorem}{Theorem}
\newtheorem{corollary}{Corollary}

\newtheorem{example}{Example}

\newcommand{\vecx}{\mathbf{x}}
\newcommand{\vecy}{\mathbf{y}}
\newcommand{\vech}{\mathbf{h}}
\newcommand{\vecc}{\mathbf{c}}
\newcommand{\vecq}{\mathbf{q}}
\newcommand{\vecd}{\mathbf{d}}
\newcommand{\vecell}{\boldsymbol{\ell}}
\newcommand{\vecu}{\mathbf{u}}
\newcommand{\vecs}{\mathbf{s}}
\newcommand{\vecw}{\mathbf{w}}
\newcommand{\vecb}{\mathbf{b}}
\newcommand{\vecpi}{\boldsymbol{\pi}}
\newcommand{\veclambda}{\boldsymbol{\lambda}}
\newcommand{\vecmu}{\boldsymbol{\mu}}

\DeclareMathOperator*{\argmin}{argmin}

\newcommand{\scenario}{k}
\newcommand{\onescenario}{\bar{k}}
\newcommand{\firststageindex}{i} 
\newcommand{\consensusvar}{\bar{\vecx}} 
\newcommand{\cvarcomp}{\bar{x}} 

\newcommand{\allfirststageindices}{\firststageindex \in [\totalfirststage]} 
\newcommand{\allscenarios}{\scenario \in [\scenarios]} 

\newcommand{\viol}{\nu} 

\newcommand{\numcontvarsone}{n_1} 
\newcommand{\numcontvarstwo}{n_2} 
\newcommand{\totalfirststage}{\numcontvarsone} 

\newcommand{\firststagespace}{\mathbb{R}_+^{\numcontvarsone}} 
\newcommand{\secondstagespace}{\mathbb{R}_+^{\numcontvarstwo}} 

\newcommand{\zSAA}{z^{2SP}} 
\newcommand{\zCG}{z^{CG}} 
\newcommand{\znac}{z^{NAC}} 

\newcommand{\zt}{\consensusvar^{(t)}}

\newcommand{\X}{\mathcal{X}}

\newcommand{\Pset}{\mathcal{P}}
\newcommand{\Pblock}[1]{\mathcal{P}_{#1}}
\newcommand{\vertex}{v} 
\newcommand{\scenprob}[1][\scenario]{p_{#1}} 

\newcommand{\candidates}{\mathcal{C}} 
\newcommand{\groupvar}{G}

\newcommand{\scenarios}{K}
\newcommand{\cgsubproblem}{\mathcal{D}_\scenario}

\newcommand{\conv}{\text{conv}}
\newcommand{\ext}{\text{ext}}
\newcommand{\startx}{\hat{\vecx}}
\newcommand{\feas}{\tilde{\vecx}} 

\newcommand{\job}{j}
\newcommand{\machine}{m}
\newcommand{\pmachines}{M}
\newcommand{\allmachines}{\bar{M}}
\newcommand{\capacity}{C}
\newcommand{\resource}{w}
\newcommand{\randomresource}{w}
\newcommand{\cost}{c}
\newcommand{\randomcost}{c}
\newcommand{\penaltycost}{r}
\newcommand{\jobmachine}{\job\machine}
\newcommand{\timestep}{t}
\newcommand{\epoch}{e}
\newcommand{\horizon}{h}
\newcommand{\lookaheadset}{H}

\newcommand{\scenarioset}{[\scenarios]}
\newcommand{\jobst}{J}

\newcommand{\jobssecond}{J_{\lookaheadset}}
\newcommand{\jobssecondk}{\jobssecond^k}
\newcommand{\jobsintersectk}{\jobst \cap \jobssecondk}

\newcommand{\partition}{\mathcal{G}} 
\newcommand{\constraintgroup}[1]{\groupvar_{#1}} 
\newcommand{\partitionblock}[1]{G_{#1}} 
\newcommand{\partindex}{\ell} 
\newcommand{\numpartitions}{r} 
\newcommand{\numconstraints}[1]{m_{#1}} 
\newcommand{\consindex}{j} 
\newcommand{\numsubdivisions}{q} 

\newcommand{\weight}[2]{w_{#1,#2}} 
\newcommand{\weightvector}[1]{\mathbf{w}_{#1}} 
\newcommand{\allweights}{\mathbf{w}} 
\newcommand{\qgradvector}[1]{\mathbf{g}_{#1}} 
\newcommand{\zweighted}[1]{z^{\text{surr}}(#1)} 
\newcommand{\zSU}{z^{SU}} 

\newcommand{\lagrmult}{\vecpi} 
\newcommand{\lagrbound}[1]{LR(#1)} 
\newcommand{\zLD}{z^{LD}} 
\newcommand{\zLP}{z^{LP}} 

\newcommand{\variant}[1]{\texttt{#1}} 

\newcommand{\papertitle}{Aggregated Column Generation for Two-Stage Stochastic Mixed-Integer Programs}

\usepackage{authblk}

\newcommand{\keywordblock}{}
\newenvironment{ackblock}{\section*{Acknowledgements}}{}
\newcommand{\coiblock}{}
\newcommand{\appendixheadings}{%
  \titleformat{\section}{\normalfont\Large\bfseries}{Appendix \thesection:}{1em}{}}

\title{\papertitle}

\author[1]{Harshit Kothari}
\author[1]{James Luedtke}
\author[2]{Luke Marshall}

\affil[1]{University of Wisconsin--Madison}
\affil[2]{Microsoft Research}
\affil[*]{Corresponding author: \texttt{hkothari2@wisc.edu}}

\date{October 13, 2026}

\begin{document}
\maketitle
%
%

\begin{abstract}
Dantzig--Wolfe reformulation applied to two-stage stochastic mixed-integer programs (SMIP) has been shown empirically to yield a formulation with strong bounds that can be solved via column generation. However, the time to solve the restricted master problem  grows significantly with the number of scenarios due to the increasing number of nonanticipativity constraints (NACs). Moreover, the associated high-dimensional dual space can cause the dual multipliers to oscillate between iterations, further slowing convergence. We propose an \emph{aggregated column generation}  framework for solving a Dantzig--Wolfe reformulation. Applied to SMIP, this method partitions the NACs into groups and replaces each group with a weighted surrogate constraint, leading to a reduced size and providing implicit stabilization by restricting the dual space. While aggregation leads to a relaxation, we develop two strategies for recovering the standard Dantzig--Wolfe reformulation bound. The first iteratively refines the partition by splitting groups that contain violated constraints. The second keeps the partition fixed and adaptively updates the aggregation weights. We prove convergence of the two approaches and study their computational performance on two benchmark problem classes.
\keywordblock
\end{abstract}

\section{Introduction}
\label{sec:introduction}

Two-stage stochastic mixed-integer programs (SMIPs) model decisions made under
uncertainty. First-stage decisions must be chosen before the uncertain
information is revealed, while second-stage recourse decisions are made after
the uncertainty is observed. These models arise in applications such as
capacity expansion \cite{ahmed2003scenario}, supply chain management
\cite{sodhi2009modeling}, and energy systems planning
\cite{wallace2003stochastic}. We study methods for computing dual bounds for
two-stage SMIPs with mixed-integer variables in both stages.
We focus on the finite-scenario setting.
Let $[\scenarios] := \{1,\ldots,\scenarios\}$ index the
scenarios, where scenario $\scenario$ corresponds to a realization of
the uncertain second-stage data
$(\mathbf{q}_\scenario,T_\scenario,W_\scenario,\mathbf{h}_\scenario)$ and has probability
$\scenprob$.

The {\it extensive form} of a two-stage SMIP can be written as follows,
\begin{mini}
    {}{\mathbf{c}^\top \mathbf{x} + \sum_{\scenario=1}^\scenarios \scenprob \mathbf{q}_\scenario^\top \mathbf{y}_\scenario}{\label{form:extensive_general}}{\zSAA = }
    \addConstraint{\mathbf{x} \in \mathcal{Q}}{}
    \addConstraint{T_\scenario \vecx + W_\scenario \vecy_\scenario}{= h_\scenario, \quad \forall\, \allscenarios}
    \addConstraint{\mathbf{x}}{\in \firststagespace, \ \vecx_{\mathcal{I}_1} \in \mathbb{Z}^{|\mathcal{I}_1|}}
    \addConstraint{\mathbf{y}_\scenario}{\in \secondstagespace,  \ \vecy_{\scenario,\mathcal{I}_2} \in
	 \mathbb{Z}^{|\mathcal{I}_2|} \quad \forall\, \allscenarios.}
\end{mini}
Here $\vecx$ represents the vector of first-stage decision variables, $\mathcal{Q}$ is a polyhedron representing constraints on the first-stage
variables, $\vecy_{\scenario}$ represents the vector of decisions that will be made
in the second stage if scenario $\scenario \in [\scenarios]$ occurs, and $\mathcal{I}_1$ and $\mathcal{I}_2$ represent
the subsets of first- and second-stage integer decision variables.
 The extensive form
becomes difficult to solve as the number of scenarios grows, motivating
decomposition methods that exploit the scenario structure.
While Benders decomposition
\cite{Benders1962,van1969shaped} is effective for problems with continuous
recourse, it is not directly applicable to SMIPs with integer recourse.

Lulli and Sen \cite{lulli2004branch} applied the Dantzig--Wolfe
framework to derive a branch-and-price method for solving multi-stage SMIP. Specializing to the two-stage case, this approach uses a reformulation in which a copy of the first-stage decision is introduced for
each scenario.
For each scenario $\scenario \in [\scenarios]$,
let $\vecx_\scenario$ denote the corresponding copy of the first-stage
decision variables. Because the first-stage decision is made before the scenario is observed, its
scenario-specific copies must agree. This requirement is enforced through
\emph{nonanticipativity constraints} (NACs). Introducing a consensus variable $\consensusvar$ gives the equivalent formulation
\begin{mini!}
    {}{\sum_{\scenario=1}^\scenarios \scenprob \Bigl(\vecc^\top \vecx_\scenario + \vecq_\scenario^\top \vecy_\scenario \Bigr)}{\label{form:saa_nac}}{\zSAA = }
    \addConstraint{\vecx_\scenario}{= \consensusvar, \quad \forall\, \allscenarios}{\label{eq:nac}}
    \addConstraint{(\vecx_\scenario, \vecy_\scenario)}{\in \cgsubproblem, \quad \forall\, \allscenarios.}{\label{eq:sscen}}
\end{mini!}
Here, for each $\allscenarios$,
\begin{align*}
\cgsubproblem = \bigl\{ \vecx \in \firststagespace, \vecy \in \secondstagespace : \ & \vecx \in \mathcal{Q},
T_\scenario \vecx + W_\scenario \vecy = \vech_\scenario,  \vecx_{\mathcal{I}_1} \in \mathbb{Z}^{|\mathcal{I}_1|}, 
 \vecy_{\mathcal{I}_2} \in \mathbb{Z}^{|\mathcal{I}_2|}
\bigr\}. 
\end{align*}
In Dantzig--Wolfe decomposition, a relaxation is obtained by
requiring
$(\vecx_\scenario, \vecy_\scenario)$ be written as a convex combination of the extreme points of the convex hull of
$\cgsubproblem$ for each $\scenario$.
Dual decomposition \cite{caroe1999dual} is a closely related approach in which a Lagrangian dual problem is obtained by relaxing the NACs \eqref{eq:nac}.
These two perspectives yield the same relaxation bound, which is at least as strong as the
linear-programming relaxation of the extensive form
\cite{lubbecke2005selected} and has been found empirically to often be very near the optimal value $\zSAA$ \cite{boland2018combining}. 
We refer to this value as the \emph{nonanticipativity bound}, denoted by $\znac$, and the focus of this paper is on
computational methods for computing this bound. 

Because the number of extreme points of the convex hull of each set
$\cgsubproblem$ may be extremely large, the Dantzig--Wolfe relaxation is typically solved by
\emph{column generation}. At each iteration, pricing subproblems identify
improving extreme points, which are added as columns to a restricted master
problem (RMP). The procedure terminates when no scenario pricing subproblem contains
a column with negative reduced cost, at which point the 
RMP attains $\znac$. When integer-optimal
solutions are required, column generation can be embedded in
branch-and-bound, yielding the branch-and-price framework for SMIPs introduced by
Lulli and Sen~\cite{lulli2004branch}.

The Dantzig--Wolfe formulation contains one NAC for each first-stage variable
and scenario, for a total of
$\numcontvarsone\cdot\scenarios$ linking constraints. The large number of NACs creates two closely related computational challenges.
First, the RMP is solved at every
column-generation iteration, so a large set of linking constraints can make
each iteration computationally expensive. Second, the corresponding
high-dimensional dual space is often highly degenerate. This degeneracy can
cause the dual multipliers used in pricing to oscillate substantially between
iterations, producing the well-known bang-bang effect and slowing
column-generation convergence \cite{briant2008comparison}. 

Following ideas introduced previously to enhance column generation for combinatorial problems with
set-partitioning structure \cite{elhallaoui2010multi,elhallaoui2005dynamic}
and packing and covering structure~\cite{costa2022stabilized}, we propose to address this issue with an \emph{aggregated column
generation} (ACG) framework that aggregates the NACs in the RMP. 
We derive these ideas generically for Dantzig--Wolfe decomposition and then specialize them
to the case of SMIPs with the NACs serving as the linking constraints.
The idea is to partition
the linking constraints into groups and replace the constraints within each group with a single weighted surrogate
constraint. This
aggregation reduces the number of linking constraints in the RMP
and restricts the dual space used for pricing. Thus, aggregation
simultaneously reduces the size of the RMP and provides an implicit
form of dual stabilization.  We develop two mechanisms for closing the gap
between the aggregated relaxation and $\znac$: \emph{partition refinement}, which
iteratively refines the partition of the linking constraints, and \emph{weight updates}, which keeps the partition
fixed and updates the aggregation weights. We then specialize these ideas to the Dantzig--Wolfe formulation
for SMIPs. 
In this setting, a natural initial partition groups all scenarios associated
with each first-stage variable, reducing the initial number of linking
constraints from
$\numcontvarsone\cdot\scenarios$
to $\numcontvarsone$.

Our work contributes to the literature on Dantzig--Wolfe formulation and column generation for solving SMIPs.
While our work builds on the scenario decomposition used in Lulli and Sen~\cite{lulli2004branch},
Silva and Wood \cite{silva2006solving} propose an alternative use of column generation/branch-and-price that exploits
instead problem-based decomposition structure (e.g., decomposing by facility in a stochastic facility location
problem).
Singh et al.~\cite{singh2009dantzig} applied Dantzig--Wolfe decomposition to a multistage
stochastic capacity-planning problem, using a particular relaxation that enables decomposition by nodes in the
scenario tree. 

More broadly, a variety of other methods for computing the nonanticipativity bound $\znac$ have been studied.
In the dual decomposition method of Car{\o}e and Schultz~\cite{caroe1999dual}, this bound is computed by optimizing the
Lagrangian dual with a bundle method \cite{kiewel1990proximity}.
Subsequent work has improved scalability of such bundle methods through parallelism, asynchrony, and specialized algorithms for optimizing the Lagrangian dual
\cite{aravena2021asynchronous,kim2019asynchronous,kim2018algorithmic,lim2021parallelizing,lubin2013parallelizing}.
Progressive hedging provides another scenario-decomposition approach based on the
nonanticipativity relaxation
for {\it convex} two-stage
stochastic programs~\cite{rockafellar1991scenarios}.
For SMIPs, progressive hedging loses its convergence
guarantees but has been shown to be useful for generating primal solutions
\cite{guo2015integration,ryan2013toward,watson2011progressive} and its dual iterates can be used to find valid bounds on the optimal value~\cite{gade2016obtaining}.
In \cite{boland2018combining}, the authors demonstrated that
combining progressive hedging with the Frank-Wolfe method
leads to a method that successfully computes the
nonanticipativity bound $\znac$.

Constraint aggregation and disaggregation techniques have a long history in mathematical optimization, e.g.,
\cite{bean1987aggregation,rogers1991aggregation}. These techniques have also been used in stochastic {\it linear}
programming \cite{birge1985aggregation,wright1994primal} and the specific idea of updating a relaxation by
adaptively partitioning a set of scenarios has
been used in \cite{van2018adaptive,espinoza2014primal,ramirez2022generalized,song2015adaptive}. However, our paper is
the first to use this idea for aggregating nonanticipativity constraints in a Dantzig--Wolfe formulation designed to yield strong
bounds for two-stage SMIPs.

Our approach is also related to the literature investigating dual stabilization methods for column generation.
These methods aim to reduce oscillation in the dual multipliers passed to the
pricing subproblems. Classical approaches
restrict dual iterates to remain near a stability center or penalize deviations
from it \cite{marsten1975boxstep,du1999stabilized}. Other approaches seek
better-centered dual solutions through interior-point or in-out stabilization
schemes \cite{gondzio2013new,pessoa2018automation,munari2013using,rousseau2007interior}, or tighten the dual feasible region using dual-optimal
inequalities \cite{amor2006dual}. 

Our work is most closely related to the previously mentioned dynamic constraint aggregation 
approach proposed for column generation of combinatorial problems with
set-partitioning structure \cite{elhallaoui2010multi,elhallaoui2005dynamic}
and packing and covering structure~\cite{costa2022stabilized}.
In \cite{elhallaoui2010multi,elhallaoui2005dynamic}
subsets of set-partitioning constraints are grouped and replaced with a representative constraint, which is then
selectively disaggregated, while also restricting the variables in the approximate problem to be consistent with the given groupings.
In
\cite{costa2022stabilized} the idea is to aggregate rows with similar dual values  in the master problem.
Different from \cite{costa2022stabilized}, our refinement strategy
updates the partition by replacing each selected aggregate with smaller
subgroups, rather than retaining the aggregate and adding constraints for
selected subsets. 
These methods motivate our use of aggregation but we describe them in a general Dantzig--Wolfe setting and adapt them to the nonanticipativity structure of two-stage SMIPs. 
In Section~\ref{sec:aggregate_constraints},
we compare our refinement strategy with an adaptation of the framework
of~\cite{costa2022stabilized} to this setting.
Furthermore, while our dynamic disaggregation method for recovering the true bound follows a strategy similar
to these works, an analog of our proposed weight update scheme has not been previously studied in this context.

\paragraph{Contributions.}
We summarize our main contributions as follows.

\begin{enumerate}[label=(\roman*)]
\item
We adapt the dynamic constraint aggregation ideas of
\cite{costa2022stabilized,elhallaoui2010multi,elhallaoui2005dynamic} to general
Dantzig--Wolfe reformulations and the
nonanticipativity constraints of two-stage SMIPs.   This process produces a
sequence of nondecreasing lower bounds that converges to the standard
column-generation bound. In many cases, it recovers this bound with far fewer
linking constraints than the fully disaggregated formulation.
\item
We develop an adaptive weighting strategy that keeps the partition fixed while
iteratively updating the weights in the aggregated constraints.
Thus, the 
RMP structure and size remain unchanged across iterations.
This provides a complementary way to strengthen the aggregated relaxation
without increasing the RMP size.
We prove that there exist weights that recover the exact column-generation bound
without partition refinement
(Theorem~\ref{thm:optimal_weights_exist}) 
and that a simple ``quasi-gradient'' method exhibits $\liminf$ convergence to this bound
(Theorem~\ref{thm:quasigradient_convergence}).

\item
We evaluate the proposed methods on two SMIP test problems.
We find that partition refinement substantially
improves column generation on these problems: it recovers the nonanticipativity bound with far
fewer linking constraints than the fully disaggregated formulation and performs
well even relative to a stabilized column-generation baseline that solves the
RMP with an interior-point method.
We also find that, while unable to fully close the gap in time comparable to the partition refinement method, the weight update method yields high-quality dual bounds much more quickly than an analogous
subgradient method applied to the Lagrangian dual.
\end{enumerate}

\paragraph{Organization.}
In Section~\ref{sec:cg_background} we derive our ideas in the context of general Dantzig--Wolfe decomposition.
In Section~\ref{sec:applications} we specialize the framework to SMIPs, and in
Section~\ref{sec:computational_results} we present the results of our computational study.

\section{Aggregated Column Generation for Dantzig--Wolfe Decomposition}
\label{sec:cg_background}

In this section we derive our results for Dantzig--Wolfe decomposition applied to
the general formulation:
\begin{mini!}
{\vecx}{\vecc^\top \vecx}{\label{eq:dw_original}}{z^* = }
\addConstraint{A \vecx}{= \vecb}{\label{eq:linking}}
\addConstraint{\vecx}{\in \X,}{}
\end{mini!}
where 
\[
\X = \{\vecx \in \mathbb{R}^n : D \vecx = \vecd,\; \vecell \le \vecx \le \vecu,\; \vecx_{\mathcal{I}} \in
\mathbb{Z}^{|\mathcal{I}|} \},
\]
and $\mathcal{I} \subseteq \{1,\dots,n\}$ is the index set of
integer-constrained variables.
In the context of the formulation \eqref{form:saa_nac} for SMIP, the constraints $A\vecx=\vecb$
correspond to the nonanticipativity constraints \eqref{eq:nac} and
the constraints $\vecx \in \X$ correspond to \eqref{eq:sscen}.
We refer to the constraints $A\vecx=\vecb$ as linking
constraints, since in the SMIP context they link the scenarios together.

In Section \ref{sec:general_cg_intro} we recall key results on Dantzig--Wolfe decomposition,
column generation, and Lagrangian relaxation that are the basis of our aggregation framework.
Then we describe the aggregated column generation approach in Section \ref{sec:aggregate_constraints}.

\subsection{Dantzig--Wolfe Decomposition and Lagrangian Relaxation Review}
\label{sec:general_cg_intro}

Dantzig--Wolfe decomposition derives a relaxation in which the set $\X$ is replaced by its
convex hull.
We assume throughout that $\X$ is bounded.
Then $\conv(\X)$ is a polytope with a finite set of extreme points
$\Pset = \ext(\conv(\X))$.
By the Minkowski--Weyl theorem \cite{schrijver1998theory}, every point in
$\conv(\X)$ can be expressed as a convex combination of these extreme points. We also assume that $\{\vecx \in
\conv(\X): A\vecx
= \vecb\}$ is non-empty.

Dantzig--Wolfe decomposition \cite{dantzig1960decomposition} replaces the feasible
set $\X$ in~\eqref{eq:dw_original} with an explicit representation of its convex hull. Writing each point of
$\conv(\X)$ as a convex combination of the extreme points $\Pset$, with weights
$\lambda_\vertex$ for $\vertex \in \Pset$, gives the following relaxation of
\eqref{eq:dw_original}:
\begin{mini!}
{}{\sum_{\vertex \in \Pset} (\vecc^\top \vecx_\vertex)\lambda_\vertex}{\label{eq:dw_extended}}{\zCG =}
\addConstraint{\sum_{\vertex \in \Pset} (A \vecx_\vertex)\lambda_\vertex}{= \vecb \label{eq:complic}}{}
\addConstraint{\sum_{\vertex \in \Pset} \lambda_\vertex}{= 1 \label{eq:convexity_constraint}}{}
\addConstraint{\lambda}{\ge 0.}{}
\end{mini!}
Equivalently, the same bound can be written directly in the original $\vecx$-space as
\begin{equation}
\label{eq:dw_convhull}
\zCG = \min \bigl\{ \vecc^\top \vecx : A\vecx = \vecb,\; \vecx \in \conv(\X) \bigr\}.
\end{equation}
Because $\conv(\X)$ is contained in the standard LP relaxation of $\X$, the
Dantzig--Wolfe relaxation is at least as strong as the LP relaxation of
\eqref{eq:dw_original}: $\zCG \ge \zLP$, where $\zLP$ denotes the LP relaxation
bound.

Because $|\Pset|$ is usually intractably large, the Dantzig--Wolfe relaxation \eqref{eq:dw_extended} is solved using
column generation.
At each iteration, a restricted master problem is solved which has the form of \eqref{eq:dw_extended} but uses only a subset of columns
$\Pset' \subseteq \Pset$. We assume availability of an initial subset $\Pset'$ which makes the initial RMP feasible. 
This restricted column set may not be sufficient to solve the full master
problem \eqref{eq:dw_extended}, so column generation searches for an extreme point of $\conv(\X)$ that
can improve the RMP objective.
Let $\vecpi^* \in \mathbb{R}^m$ and $\pi_0^* \in \mathbb{R}$ denote optimal dual
multipliers for the linking constraints \eqref{eq:complic} and the convexity constraint \eqref{eq:convexity_constraint} in
the current RMP.
These multipliers determine the reduced cost of a column, which is minimized over all
candidate columns by solving the pricing subproblem
\begin{equation}
\label{eq:dw_pricing}
\min \Bigl\{ \vecc^\top \vecx - (\vecpi^*)^\top A \vecx - \pi_0^* : \vecx \in \X \Bigr\}.
\end{equation}
If the optimal value of~\eqref{eq:dw_pricing} is negative, the corresponding vertex has negative reduced cost and is added to the RMP.
If no such vertex exists, the current RMP solution is optimal for~\eqref{eq:dw_extended}, thereby certifying attainment of $\zCG$.

To recover integer-optimal solutions of~\eqref{eq:dw_original}, column
generation can be embedded within branch-and-bound, yielding a branch-and-price
algorithm \cite{barnhart1998branch}.
In this paper, we focus on computing the Dantzig--Wolfe relaxation bound.


Lagrangian relaxation gives the dual counterpart of the Dantzig--Wolfe relaxation.
Instead of enforcing the linking constraints in a master problem, it dualizes
them and penalizes their violation in the objective \cite{fisher1981lagrangian}.

For a multiplier vector $\lagrmult \in \mathbb{R}^m$,
the Lagrangian relaxation of~\eqref{eq:dw_original} is 
\begin{equation}
\label{eq:lagrangian_subproblem}
\lagrbound{\lagrmult}
  \coloneqq \min_{\vecx \in \X} \Bigl\{ \vecc^\top \vecx + \lagrmult^\top (\vecb - A\vecx) \Bigr\}
  = \lagrmult^\top \vecb + \min_{\vecx \in \X} \Bigl\{ (\vecc - A^\top \lagrmult)^\top \vecx \Bigr\}.
\end{equation}
For every multiplier $\lagrmult \in \mathbb{R}^m$, $\lagrbound{\lagrmult}$ is a valid lower bound on the optimal value $z^*$.
Maximizing this bound over all multipliers gives
the Lagrangian dual
\begin{equation}
\label{eq:lagrangian_dual}
\zLD \coloneqq \max_{\lagrmult \in \mathbb{R}^m} \lagrbound{\lagrmult}.
\end{equation}

Lagrangian relaxation and Dantzig--Wolfe decomposition provide dual and primal views of the same relaxation.
Under
the standard assumptions ensuring strong duality for the resulting linear program,
the two bounds coincide:
\begin{equation}
\label{eq:lagrangian_dw_equivalence}
\zLD = \zCG.
\end{equation}
Thus, an optimal dual solution for the linking constraints in the Dantzig--Wolfe relaxation is also an optimal multiplier vector for the Lagrangian dual.
It also gives a dual interpretation of pricing: a pricing subproblem searches for a column with negative reduced cost, which corresponds to a violated dual constraint.
Column generation can therefore be interpreted as a cutting-plane method for the Lagrangian dual \cite{lubbecke2005selected}.

The Lagrangian dual function $\lagrbound{\cdot}$ is concave and piecewise linear, but generally nondifferentiable.
A standard way to maximize it is to update the multipliers using residuals of the relaxed linking constraints
in a subgradient method \cite{fisher1981lagrangian}. 

%

\subsection{Aggregated Column Generation}
\label{sec:aggregate_constraints}

In applications such as two-stage SMIP, the number of  linking constraints $m$ in \eqref{eq:complic} can be very
large, leading to a large RMP and potentially contributing to slow convergence.
This motivates the aggregated column generation approach, in which the number of linking constraints in the RMP is
reduced  by partitioning the $m$ linking constraints of~\eqref{eq:dw_original} into $\numpartitions \ll m$ groups
and replacing the constraints in each group with a single aggregated linking constraint.
The resulting RMP has $\numpartitions$ linking constraints rather than $m$.
The partition therefore determines the size of the aggregated RMP and the dimension of the associated dual space.

Let $\partition = \{\constraintgroup{1}, \ldots, \constraintgroup{\numpartitions}\}$ be a partition of the
linking-constraint index set $[m] := \{1,\ldots,m\}$.
For each group $\constraintgroup{\partindex}$, with $\partindex \in [\numpartitions]$, let $\numconstraints{\partindex}$ denote the number of constraints in the group.
Let
$A^\partindex \in \mathbb{R}^{\numconstraints{\partindex} \times n}$ and
$\vecb^\partindex \in \mathbb{R}^{\numconstraints{\partindex}}$
denote the corresponding coefficient rows and right-hand sides, where the rows of $A^\partindex$ and $\vecb^\partindex$ are indexed by the sets
$\constraintgroup{\partindex}$ for $\partindex \in [\numpartitions]$.
A weight vector
$\weightvector{\partindex} = \{\weight{\partindex}{j}\}_{j \in \constraintgroup{\partindex}} \in \mathbb{R}^{\numconstraints{\partindex}}$
specifies how the constraints in group $\constraintgroup{\partindex}$ are combined.
For each $\partindex \in [\numpartitions]$, we replace the constraints in $\constraintgroup{\partindex}$ by the single \emph{weighted aggregated constraint}
\begin{equation}
\label{eq:weighted_agg_general}
\weightvector{\partindex}^\top A^\partindex \vecx = \weightvector{\partindex}^\top \vecb^\partindex,
\qquad \forall\, \partindex \in [\numpartitions].
\end{equation}
Substituting the Minkowski--Weyl representation,
$\vecx = \sum_{\vertex \in \Pset} \vecx_\vertex \lambda_\vertex$,
gives the corresponding aggregated constraints in the RMP:
\begin{equation}
\label{eq:weighted_agg_master}
\sum_{\vertex \in \Pset} \bigl(\weightvector{\partindex}^\top A^\partindex \vecx_\vertex\bigr) \lambda_\vertex =
\weightvector{\partindex}^\top \vecb^\partindex,
\qquad \forall\, \partindex \in [\numpartitions].
\end{equation}
The resulting \emph{aggregated master problem} replaces the $m$ linking constraints of~\eqref{eq:dw_extended} with $\numpartitions$ aggregated constraints.
Because each column-generation iteration solves this smaller master LP, choosing $\numpartitions \ll m$ can
substantially reduce the time to solve the RMP.

Since the aggregated constraints are implied by the original constraints,
the aggregated formulation is a relaxation and therefore yields a lower bound on~$\zCG$.
We use two strategies to close the gap between the aggregated bound and~$\zCG$:
\begin{itemize}
  \item \textbf{Partition refinement} (Section~\ref{sec:refinement_general}):
    Fix all weights to one ($\weightvector{\partindex} = \mathbf{1}$)
    and iteratively split groups that contain violated individual constraints.
    This progressively tightens the relaxation.
  \item \textbf{Weight updates} (Section~\ref{sec:adaptive_weights}):
    Fix the partition and iteratively update the weight vectors using constraint residuals.
    The number of linking constraints stays fixed.
\end{itemize}
We refer to both of these methods as {\it aggregated column generation} (ACG) since they are both based on solving an
RMP with aggregated linking constraints.

The partition refinement strategy is closely related to the dynamic separation of aggregated rows (Dyn-SAR) method
of~\cite{costa2022stabilized}.
Dyn-SAR was developed for set-partitioning, packing, and covering structures. We describe the partition refinement approach
in a generic Dantzig--Wolfe context and in Section~\ref{sec:applications} we specialize it to the
two-stage SMIP context.
Partition refinement differs somewhat from Dyn-SAR in how the disaggregation is handled: when a group is split, we
replace the parent aggregate with its children, whereas the Dyn-SAR scheme retains previously generated aggregates while
adding rows for selected violations. By keeping the previous rows Dyn-SAR allows the updated RMP to be solved by
dual simplex starting from the previous RMP basis. In contrast, our approach leads to smaller growth in the size of the
RMP but loses this warm-start capability.

\subsubsection{Partition Refinement}
\label{sec:refinement_general}

A natural starting point is the unit-weight case of~\eqref{eq:weighted_agg_general}, where
$\weight{\partindex}{\consindex} = 1$ for every $\consindex \in \constraintgroup{\partindex}$.
The aggregated constraints become
\begin{equation}
\label{eq:agg_uniform}
\mathbf{1}^\top A^\partindex \vecx = \mathbf{1}^\top \vecb^\partindex,
\qquad \forall\, \partindex \in [\numpartitions].
\end{equation}
Column generation on this aggregated master follows the standard 
procedure: the RMP is solved to obtain dual multipliers $\tilde{\pi}_\partindex$ for $\partindex \in [\numpartitions]$. Since the
constraints are equally weighted, these are then extended to duals in the full dual space via
$\pi^{\text{eff}}_{j} = \tilde{\pi}_{\partindex}$ for $\partindex \in [\numpartitions]$ and $j \in \constraintgroup{\partindex}$, which are then passed to
the pricing subproblem~\eqref{eq:dw_pricing}. 


Aggregation reduces the number of dual variables in the RMP from $m$ to $\numpartitions$
and provides implicit stabilization:
multipliers within the same group share a common aggregate multiplier, eliminating
within-group dual oscillations.
The trade-off is that~\eqref{eq:agg_uniform} enforces only the sum of the constraints
in each group, not the individual constraints themselves.
Positive and negative residuals within a group can cancel, so the resulting bound may
be weaker than~$\zCG$.
Refining the partition tightens the relaxation monotonically.
If $\partition'$ refines $\partition$, meaning that every group of $\partition'$ is contained in some group of $\partition$, then the formulation under $\partition'$ is at least as tight as the formulation under $\partition$.
When every group is a singleton, the aggregated formulation coincides with the full disaggregated RMP and
yields~$\zCG$.
This monotonicity motivates an iterative
strategy: start from a coarse partition, split groups with violated constraints, and
stop once the disaggregated constraints are satisfied.

Let $\vecx^*$ be an optimal solution of the current aggregated RMP (after completion of the column generation process). 
We identify the groups that
contain at least one violated constraint:
\[
\candidates =
\bigl\{
\groupvar \in \partition :
\exists\, \consindex \in \groupvar
\text{ with } A_\consindex \vecx^* \neq \vecb_\consindex
\bigr\}.
\]
If $\candidates = \emptyset$, then the aggregated solution satisfies all original linking constraints, so the current
bound equals $\zCG$.
Otherwise, we select a subset
$\mathcal{S} \subseteq \candidates$ for refinement.
To limit growth in the RMP, we split only a fixed number or fraction of the candidate
groups at each iteration. We rank the candidates by a measure $\viol_{\groupvar}(\vecx^*)$ of total
violation and refine those with the largest values first.
Two such measures are the total absolute violation
$\viol_{\groupvar}(\vecx^*) = \sum_{\consindex \in \groupvar}
|A_\consindex \vecx^* - \vecb_\consindex|$
and the mean squared violation $\viol_{\groupvar}(\vecx^*)=1/|\groupvar| \sum_{\consindex \in \groupvar}
(A_\consindex \vecx^* - \vecb_\consindex)^2$.

For each selected group, a natural subdivision rule is to split constraints by the sign of their residual
$r_\consindex := A_\consindex \vecx^* - \vecb_\consindex$.
This creates two subgroups, corresponding to negative and positive residuals. The components with residuals
equal to zero can either be separated into their own subgroup or included with the negative or positive subgroups.
The split separates constraints with residuals of opposite signs, preventing their violations from canceling in later iterations.
Algorithm~\ref{alg:agg_cg_refinement} summarizes the resulting refinement procedure.
At each outer iteration, we solve the current aggregated RMP by column generation and then identify groups that contain violated individual constraints.
If no violations remain, the current bound equals $\zCG$ and the algorithm terminates.
Otherwise, the selected candidate groups are split and the process repeats.

In the worst case, the algorithm refines every group to a singleton and recovers the
full disaggregated formulation. In our experiments, however, $\zCG$ is often attained with
$|\partition|$ substantially smaller than $m$; see
Section~\ref{sec:computational_results}. 
Early iterations still use a coarse partition, so the RMP is small and the
dual space is low-dimensional. The restricted dual space reduces the oscillations that
often slow standard column generation, while the coarse aggregates can still capture
large improvements in the bound. As a result, the method can make rapid early
progress before adding the finer constraints needed to recover $\zCG$.

\begin{algorithm}[H]
\caption{Aggregated Column Generation with Partition Refinement}
\label{alg:agg_cg_refinement}
\begin{algorithmic}[1]
\State \textbf{Input:} Initial partition $\partition$ of $[m]$.
\Repeat
    \State Solve aggregated LP~\eqref{eq:agg_uniform} via column generation;
           let $\vecx^*$ be optimal.
    \State Compute $\candidates \gets \{\groupvar \in \partition : \exists\, \consindex \in \groupvar,\ A_\consindex
	 \vecx^* \neq \vecb_\consindex\}$.
    \If{$\candidates = \emptyset$}
        \State \textbf{break} \Comment{All individual constraints satisfied; bound equals $\zCG$.}
    \EndIf
    \State Select non-empty $\mathcal{S} \subseteq \candidates$ for refinement (e.g., largest $\viol_{\groupvar}(\vecx^*)$).
    \For{each $\groupvar \in \mathcal{S}$}
        \State Subdivide $\groupvar$ into disjoint subsets $\{\groupvar_1, \ldots, \groupvar_\numsubdivisions\}$
               (e.g., by sign of residual).
        \State $\partition \gets (\partition \setminus \{\groupvar\}) \cup \{\groupvar_1, \ldots, \groupvar_\numsubdivisions\}$.
    \EndFor
\Until{convergence}
\State \textbf{Output:} Optimal solution $\vecx^*$ and dual bound $\zCG$.
\end{algorithmic}
\end{algorithm}

\subsubsection{Weight Updates}
\label{sec:adaptive_weights}

Partition refinement can recover $\zCG$, but it increases the number of RMP constraints as groups are split.
An alternative is to keep the partition fixed at $\numpartitions$ groups and improve the bound by updating the weight
vectors, leaving the RMP structure unchanged.

Consider a fixed partition
$\partition = \{\constraintgroup{1}, \ldots, \constraintgroup{\numpartitions}\}$ of
$[m]$ with weight vectors
$\{\weightvector{\partindex}\}_{\partindex=1}^{\numpartitions}$.
Each group $\partindex \in [\numpartitions]$ contributes one aggregated constraint~\eqref{eq:weighted_agg_general} to
the RMP, so it has one dual variable $\pi_\partindex$.
Let $\pi_0$ denote the dual multiplier for the convexity constraint.
The pricing subproblem becomes
\begin{equation}
\label{eq:weighted_pricing_general}
\min \Bigl\{ \vecc^\top \vecx
    - \sum_{\partindex=1}^{\numpartitions}
      \pi_\partindex \weightvector{\partindex}^\top A^\partindex \vecx
    - \pi_0 : \vecx \in \X \Bigr\}.
\end{equation}
Expanding the weighted aggregates shows that each original constraint
$A^\partindex_\consindex \vecx = \vecb^\partindex_\consindex$ receives the effective dual
multiplier
\begin{equation}
\label{eq:effective_dual_general}
\pi_\consindex^{\text{eff}}
\coloneqq
\tilde{\pi}_\partindex \weight{\partindex}{\consindex},
\qquad
\partindex \in [\numpartitions],\;
\consindex \in \constraintgroup{\partindex}.
\end{equation}
Thus, although the RMP contains only $\numpartitions$ dual variables, the weights
induce $m$ effective multipliers, one for each original constraint. Updating the
weights changes the pricing signals associated with individual constraints
while retaining a compact RMP.

We next formulate the choice of weights as an optimization problem. Let
$\allweights = (\weightvector{1}, \ldots, \weightvector{\numpartitions})
\in \mathbb{R}^m$ denote the vector obtained by concatenating the group weights. For
fixed weights $\allweights$, replacing the $m$ linking constraints with the $\numpartitions$ weighted aggregates, while
keeping the convexified set $\vecx \in \conv(\X)$, gives the generalized surrogate relaxation value
\begin{equation}
\label{eq:aggregated_lp_weighted}
\zweighted{\allweights}
  =
  \min \bigl\{ \vecc^\top \vecx :
      \weightvector{\partindex}^\top A^\partindex \vecx
      =
      \weightvector{\partindex}^\top \vecb^\partindex\;
      \ \forall\, \partindex \in [\numpartitions],\;
      \vecx \in \conv(\X)
  \bigr\}.
\end{equation}
Every point feasible for the disaggregated relaxation is feasible for this aggregated
relaxation, so $\zweighted{\allweights} \le \zCG$ for every $\allweights$.
The corresponding generalized surrogate dual problem is
\begin{equation}
\label{eq:surrogate_dual_problem}
\zSU \coloneqq \max_{\allweights \in \mathbb{R}^m}
\zweighted{\allweights}.
\end{equation}
It seeks the strongest bound obtainable with the fixed partition. By construction, $\zSU \leq \zCG$.
The surrogate dual \eqref{eq:aggregated_lp_weighted} is  closely related to the dual used in
\cite{glover1975surrogate,muller2020generalized}. However, our context is different and simpler -- our surrogate
relaxation \eqref{eq:aggregated_lp_weighted} is a 
convex problem (using column generation to handle the convex constraint $\vecx \in \conv(\X)$), whereas the surrogate
relaxation \cite{glover1975surrogate,muller2020generalized} is a nonconvex problem (e.g., containing integer variables).
As a result, we find that the special structure of our surrogate dual
enables it to be solved by a simple ``quasi-gradient'' method that would not apply in the nonconvex setting.

The next theorem shows that the generalized surrogate dual recovers the full column-generation bound.
In particular, the optimal Lagrangian dual multipliers provide one choice of optimal aggregation weights.

\begin{theorem}
\label{thm:optimal_weights_exist}
Let $\vecpi^* \in \mathbb{R}^m$ be an optimal dual solution of the Lagrangian dual \eqref{eq:lagrangian_dual}.
For each group $\partindex \in [\numpartitions]$, let $\vecpi^{*,\partindex} \in
\mathbb{R}^{\numconstraints{\partindex}}$ denote the subvector of $\vecpi^*$ associated with that group.
Then $\allweights^* = \vecpi^*$
(i.e., $\weightvector{\partindex}^* = \vecpi^{*,\partindex}$ for each $\partindex \in [\numpartitions]$) is a maximizer of the generalized surrogate dual~\eqref{eq:surrogate_dual_problem}, and
\begin{equation}
\label{eq:optimal_weights_formula}
\zSU = \zweighted{\vecpi^*} = \zCG.
\end{equation}
\end{theorem}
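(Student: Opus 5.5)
We need $\zweighted{\vecpi^*}\ge \zCG$. The reverse inequality $\zweighted{\allweights}\le \zCG$ holds for every $\allweights$, since the aggregated relaxation is implied by the disaggregated one. Combining the two gives $\zCG \le \zweighted{\vecpi^*} \le \zSU \le \zCG$, which yields both the maximizer claim and \eqref{eq:optimal_weights_formula}.

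To show $\zweighted{\vecpi^*}\ge\zCG$, we use weak Lagrangian duality for the aggregated problem. Write the problem as $\min\{\vecc^\top\vecx : \vecx\in\conv(\X),\ \vecpi^{*,\partindex\top}A^\partindex\vecx = \vecpi^{*,\partindex\top}\vecb^\partindex\ \forall \partindex\}$. Dualize its $\numpartitions$ aggregated constraints with multipliers $\mu_\partindex$, and choose $\mu_\partindex = 1$ for all $\partindex$. Then for any feasible $\vecx$,
\[
\vecc^\top\vecx = \vecc^\top\vecx + \sum_{\partindex=1}^{\numpartitions} \vecpi^{*,\partindex\top}\bigl(\vecb^\partindex - A^\partindex\vecx\bigr) = \vecc^\top\vecx + (\vecpi^*)^\top(\vecb - A\vecx) \ge \min_{\vecy\in\conv(\X)}\bigl\{\vecc^\top\vecy + (\vecpi^*)^\top(\vecb-A\vecy)\bigr\}.
\]
A linear objective attains its minimum over the polytope $\conv(\X)$ at an extreme point, and extreme points lie in $\X$. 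The right-hand side therefore equals $\lagrbound{\vecpi^*} = \zLD$. By \eqref{eq:lagrangian_dw_equivalence}, $\zLD = \zCG$. Minimizing over feasible $\vecx$ gives $\zweighted{\vecpi^*}\ge\zCG$.

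The feasible set of \eqref{eq:aggregated_lp_weighted} is nonempty, because it contains $\{\vecx\in\conv(\X): A\vecx=\vecb\}$, which is assumed nonempty. So $\zweighted{\vecpi^*}$ is a finite minimum rather than $+\infty$.

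The main point needing care is the identity $\zLD=\zCG$ in \eqref{eq:lagrangian_dw_equivalence}, which the paper takes as given. It holds here by LP duality: $\X$ is bounded, so $\conv(\X)$ is a polytope, and the master LP \eqref{eq:dw_extended} is feasible. Everything else is the telescoping identity above, which uses only that the groups partition $[m]$, so that the group terms sum to the full inner product $(\vecpi^*)^\top(\vecb-A\vecx)$.
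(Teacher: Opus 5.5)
Your proof is correct and follows the paper's route. For any point feasible for the aggregated problem with weights $\vecpi^*$, you add the vanishing term $\sum_{\ell}(\vecpi^{*,\ell})^\top(\vecb^\ell - A^\ell\vecx)$, bound below by the Lagrangian value at $\vecpi^*$ minimized over $\conv(\X)$, use $\zLD=\zCG$, and sandwich with $\zweighted{\vecpi^*}\le\zSU\le\zCG$. Your weak-duality framing with unit multipliers, and your remarks on nonemptiness and the extreme-point step, only make explicit what the paper leaves implicit.
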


\begin{proof}

Since $\zweighted{\vecpi^*} \le \zSU \le \zCG$ are immediate, it suffices to prove that $\zweighted{\vecpi^*} \ge \zCG$.

Evaluating the Lagrangian relaxation~\eqref{eq:lagrangian_subproblem} at $\vecpi^*$ and using
$\zCG = \zLD$ from~\eqref{eq:lagrangian_dw_equivalence} gives
\begin{equation}
\label{eq:lagrangian_at_pistar}
\zCG
= \sum_{\partindex=1}^{\numpartitions} (\vecpi^{*,\partindex})^\top \vecb^\partindex
  + \min_{\vecx \in \conv(\X)} \Bigl\{ \vecc^\top \vecx
    - \sum_{\partindex=1}^{\numpartitions} (\vecpi^{*,\partindex})^\top A^\partindex \vecx \Bigr\}.
\end{equation}
Let $\feas$ be any feasible solution of~\eqref{eq:aggregated_lp_weighted}
with $\weightvector{\partindex} = \vecpi^{*,\partindex}$.
Then $\feas \in \conv(\X)$ and
$(\vecpi^{*,\partindex})^\top A^\partindex \feas = (\vecpi^{*,\partindex})^\top \vecb^\partindex$
for all $\partindex \in [\numpartitions]$.
Substituting the aggregated-constraint feasibility into the Lagrangian objective:
\begin{align*}
\vecc^\top \feas
&= \Bigl( \vecc^\top \feas - \sum_{\partindex=1}^{\numpartitions} (\vecpi^{*,\partindex})^\top A^\partindex \feas \Bigr) +
\sum_{\partindex=1}^{\numpartitions} (\vecpi^{*,\partindex})^\top \vecb^\partindex \\
&\ge \min_{\vecx \in \conv(\X)} \Bigl\{ \vecc^\top x - \sum_{\partindex=1}^{\numpartitions} (\vecpi^{*,\partindex})^\top
A^\partindex \vecx \Bigr\} + \sum_{\partindex=1}^{\numpartitions} (\vecpi^{*,\partindex})^\top \vecb^\partindex
= \zCG.
\end{align*}
Since every feasible solution of~\eqref{eq:aggregated_lp_weighted} with weights $\vecpi^*$
has objective value at least $\zCG$, taking the minimum gives $\zweighted{\vecpi^*} \ge \zCG$.
Combined with $\zweighted{\vecpi^*} \leq \zSU \leq \zCG$, this implies
\[
\zweighted{\vecpi^*} = \zSU = \zCG,
\]
which completes the proof.
\end{proof}


Theorem~\ref{thm:optimal_weights_exist} shows that the aggregated formulation with
$\numpartitions$ constraints can attain the full $m$-constraint bound when the weights
equal an optimal dual multiplier vector $\vecpi^*$. Because $\vecpi^*$ is obtained from the
optimal Lagrangian dual, it is not available in advance.
Directly optimizing the surrogate dual is also not straightforward:
$\zweighted{\allweights}$ need not be concave, or even quasiconcave, because the feasible set
in~\eqref{eq:aggregated_lp_weighted} depends on the weights.
Example~\ref{ex:surrogate_nonconcave} illustrates this behavior.

\begin{example}[Non-quasiconcavity of the surrogate dual]
\label{ex:surrogate_nonconcave}
Consider a three-scenario problem with first-stage copies $x_1, x_2, x_3$,
consensus variable $\cvarcomp$, and second-stage recourse variables $y_1, y_2, y_3$ for
scenarios~1-3.
The NACs are $x_j = \cvarcomp$, $j=1,2,3$:
\begin{equation}
\label{eq:lp}
\begin{aligned}
&\max && x_1\\
&\text{s.t.} && x_j = \cvarcomp, \quad j=1,2,3 \\
&&& x_1 + y_1 \le 14,
\ x_2+y_2 \le 12, \ x_3 + y_3 \le 10 ,\\
&&& x_j \geq 0, y_j \geq 0, \quad j=1,2,3 
\end{aligned}
\end{equation}
The surrogate relaxation with weights $\allweights = (w_1, w_2,w_3)$ is:
\begin{equation}
\label{eq:inner-lp}
\begin{aligned}
\zweighted{\allweights}\;:=\;&\max && x_1\\
&\text{s.t.} && w_1 x_1 + w_2 x_2 + w_3 x_3 \;=\; (w_1 + w_2 + w_3)\cvarcomp, \,\\
&&& x_1 + y_1 \le 14,
\ x_2+y_2 \le 12, \ x_3 + y_3 \le 10 ,\\
&&& x_j \geq 0, y_j \geq 0, \quad j=1,2,3 .
\end{aligned}
\end{equation}
Take \(\allweights^{(1)}=(1,-1,0)\) and \(\allweights^{(2)}=(0,1,-1)\), with midpoint
\(\bar{\allweights}=\tfrac12(\allweights^{(1)}+\allweights^{(2)})=(1/2,0,-1/2)\).
Evaluating \eqref{eq:inner-lp} at each gives
\[
\zweighted{\allweights^{(1)}}=12,\qquad
\zweighted{\allweights^{(2)}}=14,\qquad
\zweighted{\bar{\allweights}}=10.
\]
Since \(\zweighted{\bar{\allweights}}=10 < \min\{\zweighted{\allweights^{(1)}},\zweighted{\allweights^{(2)}}\}= 12 \),
the function \(\zweighted{\cdot}\) is not quasiconcave.
\end{example}

The lack of (quasi)concavity means that residuals of the original constraints are
not supergradients.
Despite this, we find that using the residuals as ascent-like directions within a weight update scheme leads to a
(lim inf) convergent algorithm.
Algorithm~\ref{alg:quasigradient_weighted} summarizes the procedure.
Given weights $\allweights^{(t)}$ at iteration $t$,
the weighted aggregated LP~\eqref{eq:aggregated_lp_weighted}
is solved via column generation to obtain an optimal solution $\vecx^{(t)}$
and objective value $\zweighted{\allweights^{(t)}}$ (line~4).
For each group $\partindex \in [\numpartitions]$, the residual vector
\begin{equation}
\label{eq:quasigradient_vector}
\qgradvector{\partindex}^{(t)} = \vecb^\partindex - A^\partindex \vecx^{(t)} \in \mathbb{R}^{\numconstraints{\partindex}}
\end{equation}
measures these per-constraint violations (line~6).
The weights are updated in the residual direction (line~7):
\begin{equation}
\label{eq:weight_update_vector}
\weightvector{\partindex}^{(t+1)} = \weightvector{\partindex}^{(t)} + \alpha_t\, \qgradvector{\partindex}^{(t)},
\qquad \forall\, \partindex \in [\numpartitions],
\end{equation}
where $(\alpha_t)_{t \geq 0}$ is a step-size sequence satisfying the
Robbins--Monro conditions~\eqref{eq:step_size_conditions}
(e.g., $\alpha_t = \alpha_0/(t+1)$).
Because the residual directions are not supergradients (the function is not concave), we
refer to them as
\emph{quasi-gradients}.
The algorithm terminates when
$\|\qgradvector{\partindex}^{(t)}\|_2 \le \epsilon$ for all $\partindex \in [\numpartitions]$ (i.e., the aggregated RMP
solution is approximately feasible)
or when a maximum number of iterations is reached.

The scheme mirrors the
subgradient method for the Lagrangian dual: each iteration computes per-constraint
violations and uses them to adjust the weights. The difference is that in this scheme the residuals are computed based
on the solution of the aggregated RMP (solved via column generation), whereas in the subgradient method applied to the Lagrangian dual the
residuals are based on the solutions of the Lagrangian relaxation subproblems.


\begin{algorithm}[H]
\caption{Quasi-Gradient Method for Weighted Aggregation}
\label{alg:quasigradient_weighted}
\begin{algorithmic}[1]
\State \textbf{Input:} Partition $\{\partitionblock{1},\ldots,\partitionblock{\numpartitions}\}$ of $[m]$
       with constraint data $\{A^\partindex, \vecb^\partindex\}_{\partindex=1}^\numpartitions$;
       step-size parameter $\alpha_0 > 0$; tolerance $\epsilon > 0$.
\State \textbf{Initialize:} $\weight{\partindex}{\consindex}^{(0)} = 1$ for all $\partindex, \consindex$;
       $t \gets 0$.
\Repeat
    \State Solve~\eqref{eq:aggregated_lp_weighted} via column generation with weights
           $\{\weightvector{\partindex}^{(t)}\}$; let $\vecx^{(t)}$ be optimal.
    \For{$\partindex = 1$ to $\numpartitions$}
        \State $\qgradvector{\partindex}^{(t)} \gets \vecb^\partindex - A^\partindex \vecx^{(t)}$
        \State $\weightvector{\partindex}^{(t+1)} \gets
               \weightvector{\partindex}^{(t)} + \alpha_t\, \qgradvector{\partindex}^{(t)}$
    \EndFor
    \State $t \gets t + 1$
\Until{$\|\qgradvector{\partindex}^{(t-1)}\|_2 \le \epsilon$ for all $\partindex$,
       or maximum iterations reached.}
\State \textbf{Output:} Weights $\{\weightvector{\partindex}^{(t-1)}\}$ and dual bound $\zweighted{\allweights^{(t-1)}}$.
\end{algorithmic}
\end{algorithm}

Despite the lack of concavity, the following theorem shows that the generated bounds attain $\zCG$ in the limit.

\begin{theorem}[Convergence of Quasi-Gradient Method]
\label{thm:quasigradient_convergence}
Let $\X$ be compact
and let the step-size sequence $\{\alpha_t\}_{t=0}^\infty$ satisfy the Robbins--Monro conditions:
\begin{equation}
\label{eq:step_size_conditions}
\sum_{t=0}^\infty \alpha_t = \infty
\quad \text{and} \quad
\sum_{t=0}^\infty \alpha_t^2 < \infty.
\end{equation}
Then the sequence of bounds $\{\zweighted{\allweights^{(t)}}\}_{t=0}^\infty$
produced by Algorithm~\ref{alg:quasigradient_weighted} with the stopping conditions removed satisfies
\begin{equation}
\label{eq:convergence_result}
\liminf_{t \to \infty}  \zweighted{\allweights^{(t)}}  =  \zCG.
\end{equation}
\end{theorem}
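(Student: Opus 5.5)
The plan is to compare Algorithm~\ref{alg:quasigradient_weighted} with a subgradient method on the Lagrangian dual \eqref{eq:lagrangian_dual}. I would use a Fej\'er-type inequality relative to an optimal multiplier $\vecpi^*$, and then upgrade a $\limsup$-type conclusion to convergence of the whole sequence by means of the concave minorant $\lagrbound{\cdot}$.

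\textbf{Two elementary facts.} Let $\vecx^{(t)}$ be optimal for \eqref{eq:aggregated_lp_weighted} with weights $\allweights^{(t)}$, let $\vecg^{(t)} = \vecb - A\vecx^{(t)}$ be the concatenation of the $\qgradvector{\partindex}^{(t)}$, and write $z_t := \zweighted{\allweights^{(t)}}$.
\begin{itemize}
\item[(a)] Blockwise feasibility gives $(\weightvector{\partindex}^{(t)})^\top \qgradvector{\partindex}^{(t)} = 0$ for every $\partindex$, so $(\allweights^{(t)})^\top \vecg^{(t)} = 0$. Consequently, for any $\allweights$ feasible with some optimal $\vecx$,
\[
z(\allweights) = \vecc^\top\vecx + \allweights^\top(\vecb-A\vecx) \ge \lagrbound{\allweights}.
\]
Together with $z(\allweights) \le \zCG$ this yields the sandwich $\lagrbound{\allweights^{(t)}} \le z_t \le \zCG$.
\item[(b)] Since $\vecx^{(t)} \in \conv(\X)$, we have $\zCG = \lagrbound{\vecpi^*} \le \vecc^\top\vecx^{(t)} + (\vecpi^*)^\top \vecg^{(t)}$, that is, $-(\vecpi^*)^\top\vecg^{(t)} \le z_t - \zCG \le 0$.
\end{itemize}

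\textbf{Fej\'er step.} Expand the update:
\[
\|\allweights^{(t+1)}-\vecpi^*\|^2 = \|\allweights^{(t)}-\vecpi^*\|^2 + 2\alpha_t(\vecg^{(t)})^\top(\allweights^{(t)}-\vecpi^*) + \alpha_t^2\|\vecg^{(t)}\|^2 .
\]
By (a) and (b), the middle term is at most $-2\alpha_t(\zCG - z_t) \le 0$. Compactness of $\X$ bounds $\|\vecg^{(t)}\|\le G$. Summing and using \eqref{eq:step_size_conditions} gives two conclusions. First, $\sum_t \alpha_t(\zCG - z_t) < \infty$. Second, $\{\allweights^{(t)}\}$ is quasi-Fej\'er with respect to the whole set $\Pi^*$ of Lagrangian-optimal multipliers, since the argument works for every $\vecpi^*\in\Pi^*$. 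In particular $\{\allweights^{(t)}\}$ is bounded and $\|\allweights^{(t)}-\vecpi^*\|$ converges for each $\vecpi^*\in\Pi^*$.

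\textbf{Upgrade to the $\liminf$.} The first conclusion alone only gives a subsequence with $z_t\to\zCG$. To obtain \eqref{eq:convergence_result}, I would show that $\allweights^{(t)} \to \bar{\allweights}$ for some $\bar{\allweights}\in\Pi^*$. Then continuity of the finite concave function $\lagrbound{\cdot}$ and the sandwich in (a) give
\[
\zCG = \lagrbound{\bar{\allweights}} = \lim_t \lagrbound{\allweights^{(t)}} \le \liminf_t z_t \le \zCG .
\]
By the standard quasi-Fej\'er lemma, convergence of the whole sequence follows once a single cluster point of $\{\allweights^{(t)}\}$ lies in $\Pi^*$. So the crux is to show that \emph{some} cluster point is Lagrangian-optimal.

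\textbf{The main obstacle.} We only know $z_t\to\zCG$ along a subsequence $T$, whereas optimality of a cluster point requires $\lagrbound{\bar{\allweights}}=\zCG$, and $z$ is not continuous in $\allweights$. My first attempt would be to pass to a further subsequence of $T$ along which $\allweights^{(t)}\to\bar{\allweights}$ and $\vecx^{(t)}\to\bar{\vecx}\in\conv(\X)$. By (b), $(\vecpi^*)^\top(\vecb-A\bar{\vecx})=0$ and $\vecc^\top\bar{\vecx}=\zCG$, and by (a), $\bar{\allweights}^\top(\vecb-A\bar{\vecx})=0$. Optimality of $\vecx^{(t)}$ in \eqref{eq:aggregated_lp_weighted} supplies LP multipliers $\tilde{\pi}^{(t)}_\partindex$ for the aggregated constraints. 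I would then try to show that the effective duals \eqref{eq:effective_dual_general}, namely $\pi^{\mathrm{eff}}_\consindex = \tilde{\pi}_\partindex\,\weight{\partindex}{\consindex}$, satisfy the Lagrangian optimality (saddle-point) conditions at $\bar{\vecx}$ in the limit. This would give a Lagrangian-optimal multiplier of the form $\tilde{\pi}\circ\bar{\allweights}$.

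The delicate point is the blockwise scale invariance $z(\allweights)=z(s\allweights)$: this identifies only the direction of $\bar{\allweights}$ within each block, not $\bar{\allweights}$ itself, with an optimal multiplier. If a direct identification fails, the fallback is to exploit the fact that $\|\allweights^{(t)}\|^2$ is nondecreasing. Indeed, $\|\allweights^{(t+1)}\|^2=\|\allweights^{(t)}\|^2+\alpha_t^2\|\vecg^{(t)}\|^2$ by the orthogonality in (a), so the blockwise norms converge. I would combine this with the vanishing increments $\|\allweights^{(t+1)}-\allweights^{(t)}\|\le\alpha_t G\to 0$ to show that $z_t$ cannot drop far below $\zCG$ infinitely often between the good indices in $T$.
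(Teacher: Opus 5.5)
\paragraph{Verdict.} Your facts (a) and (b), the expansion of $\|\allweights^{(t+1)}-\vecpi^*\|^2$, and the bound $\sum_t \alpha_t(\zCG - z_t)<\infty$ are exactly the paper's whole proof. You are also right that this yields only a subsequence. The paper concludes $\liminf_t \Delta_t = 0$ and then states the displayed identity. But $\liminf_t \Delta_t = \zCG - \limsup_t z_t$, so the paper's final ``Hence'' really gives only $\limsup_t \zweighted{\allweights^{(t)}} = \zCG$. Up to that point, your proposal proves everything the paper's argument proves.

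\paragraph{The gap.} The missing piece is the upgrade to the displayed $\liminf$ identity. You leave it open, and the route you propose cannot work in general. Your own identity $\|\allweights^{(t+1)}\|^2=\|\allweights^{(t)}\|^2+\alpha_t^2\|\mathbf{g}^{(t)}\|^2$ confines every iterate to the ball of radius $R=(\|\allweights^{(0)}\|^2+G^2\sum_s\alpha_s^2)^{1/2}$, with $G$ your bound on $\|\mathbf{g}^{(t)}\|$. Now replace $\vecc$ by $\beta\vecc$ with $\beta>0$. This leaves the optimal sets of the aggregated LPs unchanged, so the same run, the same $G$ and the same $R$ remain admissible. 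It does, however, replace $\Pi^*$ by $\beta\Pi^*$. So whenever $\lagrbound{\mathbf{0}}<\zCG$, you can arrange $\min_{\vecpi\in\Pi^*}\|\vecpi\|>R$. Then:
\begin{itemize}
\item no cluster point of $\{\allweights^{(t)}\}$ lies in $\Pi^*$, so the quasi-Fej\'er lemma never applies;
\item by compactness, $\lagrbound{\allweights^{(t)}}\le\zCG-\eta$ for a fixed $\eta>0$, so the sandwich through $\lagrbound{\cdot}$ yields nothing.
\end{itemize}

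\paragraph{The other attempted steps.} The effective-dual idea can at best certify a blockwise rescaling of $\bar{\allweights}$. That is exactly the scale problem you flag, and the Fej\'er inequality cannot use such a point, since it holds only relative to points of $\Pi^*$. The fallback has no mechanism: $\zweighted{\cdot}$ is discontinuous, even at nonzero weights, so $\|\allweights^{(t+1)}-\allweights^{(t)}\|\le\alpha_t G$ does not control $z_t-z_{t+1}$. Also, along your subsequence, (b) gives only $(\vecpi^*)^\top(\vecb-A\bar{\vecx})\ge 0$, not equality.

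\paragraph{What would suffice.} A minorant that respects the blockwise scale invariance does exist. By LP duality for the aggregated problem,
\[
\zweighted{\allweights}=\max_{\mu\in\mathbb{R}^{\numpartitions}}\lagrbound{(\mu_1\weightvector{1},\ldots,\mu_{\numpartitions}\weightvector{\numpartitions})},
\]
and this is lower semicontinuous in $\allweights$. So it would suffice to show $\allweights^{(t)}\to\bar{\allweights}$ with $\zweighted{\bar{\allweights}}=\zCG$. Neither the convergence of the iterates nor the optimality of a limit is established. As it stands, your argument, like the paper's, proves convergence of the bounds along a subsequence, not the displayed $\liminf$ identity.
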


\begin{proof}
By Theorem~\ref{thm:optimal_weights_exist},
there exist optimal weights $\allweights^* = \vecpi^* \in \mathbb{R}^m$
such that $\zweighted{\allweights^*} = \zCG$.
Define the optimality gap at iteration $t$ as $\Delta_t \coloneqq \zCG - \zweighted{\allweights^{(t)}} \ge 0$.
Since $\vecx^{(t)}$ is feasible for the aggregated formulation~\eqref{eq:aggregated_lp_weighted},
it satisfies every weighted aggregated constraint:
\begin{equation}
\label{eq:orthogonality_condition}
\weightvector{\partindex}^{(t)\top} \qgradvector{\partindex}^{(t)} = 0,
\qquad \forall\, \partindex \in [\numpartitions],
\end{equation}
where $\qgradvector{\partindex}^{(t)} = \vecb^\partindex - A^\partindex \vecx^{(t)}$
is the residual vector for group $\partindex$.
Summing over groups gives
\begin{equation}
\label{eq:weight_orthogonality}
\sum_{\partindex=1}^\numpartitions \weightvector{\partindex}^{(t)\top} \qgradvector{\partindex}^{(t)} = 0.
\end{equation}

Since $\zCG = \zLD$ and the Lagrangian dual value at $\allweights^* = \vecpi^*$ equals $\zLD$,
\[
\zCG = \sum_{\partindex=1}^\numpartitions \weightvector{\partindex}^{*\top} \vecb^\partindex + \min_{\vecx \in \X} \Bigl\{
\vecc^\top \vecx - \sum_{\partindex=1}^\numpartitions \weightvector{\partindex}^{*\top} A^\partindex \vecx \Bigr\}.
\]
Substituting $\vecx^{(t)} \in \X$ in place of the minimizer gives an upper bound:
\begin{align}
\zCG
&\le \sum_{\partindex=1}^\numpartitions \weightvector{\partindex}^{*\top} \vecb^\partindex + \vecc^\top \vecx^{(t)} -
\sum_{\partindex=1}^\numpartitions \weightvector{\partindex}^{*\top} A^\partindex \vecx^{(t)} \notag \\
&= \zweighted{\allweights^{(t)}} + \sum_{\partindex=1}^\numpartitions \weightvector{\partindex}^{*\top} \qgradvector{\partindex}^{(t)}. \notag 
\end{align}
Rearranging yields
\begin{align}
\Delta_t
&\le \sum_{\partindex=1}^\numpartitions \weightvector{\partindex}^{*\top} \qgradvector{\partindex}^{(t)}. \label{eq:dual_gap_bound}
\end{align}

Expanding the squared distance after the weight update~\eqref{eq:weight_update_vector}:
\begin{align*}
\| \allweights^{(t+1)} - \allweights^* \|^2
&= \| \allweights^{(t)} + \alpha_t \qgradvector{}^{(t)} - \allweights^* \|^2 \\
&= \| \allweights^{(t)} - \allweights^* \|^2
+ 2\alpha_t \sum_{\partindex=1}^\numpartitions (\weightvector{\partindex}^{(t)} - \weightvector{\partindex}^{*})^\top \qgradvector{\partindex}^{(t)}
+ \alpha_t^2 \| \qgradvector{}^{(t)} \|^2.
\end{align*}
The cross term simplifies using~\eqref{eq:weight_orthogonality} and~\eqref{eq:dual_gap_bound}:
\[
\sum_{\partindex=1}^\numpartitions (\weightvector{\partindex}^{(t)} - \weightvector{\partindex}^{*})^\top \qgradvector{\partindex}^{(t)}
= \sum_{\partindex=1}^\numpartitions \weightvector{\partindex}^{(t)\top} \qgradvector{\partindex}^{(t)}
- \sum_{\partindex=1}^\numpartitions \weightvector{\partindex}^{*\top} \qgradvector{\partindex}^{(t)}
\le 0 -\Delta_t = - \Delta_t.
\]
Since $\X$ is compact, the residuals are uniformly bounded:
there exists $M > 0$ such that $\sum_{\partindex=1}^\numpartitions \| \qgradvector{\partindex}^{(t)} \|_2^2 \le M$ for all $t$.
Combining these bounds:
\begin{align*}
\| \allweights^{(t+1)} - \allweights^* \|^2
&\le \| \allweights^{(t)} - \allweights^* \|^2 - 2\alpha_t \Delta_t + \alpha_t^2 M, \\
\Rightarrow \quad 2\alpha_t \Delta_t
&\le \| \allweights^{(t)} - \allweights^* \|^2 - \| \allweights^{(t+1)} - \allweights^* \|^2 + \alpha_t^2 M.
\end{align*}

Summing from $t = 0$ to $T$, the distance terms telescope:
\[
2 \sum_{t=0}^T \alpha_t \Delta_t
\le \| \allweights^{(0)} - \allweights^* \|^2
   - \underbrace{\| \allweights^{(T+1)} - \allweights^* \|^2}_{\ge \, 0}
   + M \sum_{t=0}^T \alpha_t^2
\le \| \allweights^{(0)} - \allweights^* \|^2
   + M \sum_{t=0}^T \alpha_t^2.
\]
Taking $T \to \infty$, the right-hand side remains finite
by the step-size condition $\sum \alpha_t^2 < \infty$, so
\begin{equation}
\label{eq:weighted_gaps_finite}
\sum_{t=0}^\infty \alpha_t \Delta_t < \infty.
\end{equation}
Since $\Delta_t \ge 0$, $\sum \alpha_t = \infty$, and $\sum \alpha_t \Delta_t < \infty$,
we conclude $\liminf_{t \to \infty} \Delta_t = 0$:
if $\liminf \Delta_t > 0$, there would exist $\epsilon > 0$ and $T_0$ such that $\Delta_t \ge \epsilon$ for all $t \ge T_0$,
giving $\sum_{t \ge T_0} \alpha_t \Delta_t \ge \epsilon \sum_{t \ge T_0} \alpha_t = \infty$,
contradicting~\eqref{eq:weighted_gaps_finite}.
Hence $\liminf_{t \to \infty} \zweighted{\allweights^{(t)}} = \zCG$.
\end{proof}

\begin{corollary}[Extension to Inequality Constraints]
\label{cor:inequality_constraints}
Theorem~\ref{thm:quasigradient_convergence} extends to inequality constraints
$A^\partindex \vecx \le \vecb^\partindex$ (or $A^\partindex \vecx \ge \vecb^\partindex$)
by converting them to equalities with bounded slack variables.
For each group $\partindex \in [\numpartitions]$ with constraints $A^\partindex \vecx \le \vecb^\partindex$,
introduce $\vecs^\partindex \in \mathbb{R}^{\numconstraints{\partindex}}$ satisfying
\begin{equation}
\label{eq:slack_bounds}
\mathbf{0} \le \vecs^\partindex \le \vecb^\partindex - \min_{\vecx \in \X} A^\partindex \vecx
\end{equation}
and rewrite the constraints as $A^\partindex \vecx + \vecs^\partindex = \vecb^\partindex$.
Compactness of $\X$ ensures the upper bound in~\eqref{eq:slack_bounds} is finite,
so $\vecs^\partindex$ remains bounded.
The quasi-gradient becomes
$\qgradvector{\partindex}^{(t)} = \vecb^\partindex - A^\partindex \vecx^{(t)} - {\vecs^\partindex}^{(t)}$,
and the analysis carries through unchanged.
\end{corollary}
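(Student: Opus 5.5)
The plan is to reduce the inequality case to the equality setting already covered by Theorem~\ref{thm:optimal_weights_exist} and Theorem~\ref{thm:quasigradient_convergence}. I would do this by building an augmented compact set and checking that every ingredient of the earlier proof still holds for it. For each group $\partindex$ with constraints $A^\partindex \vecx \le \vecb^\partindex$, define $\vecu^\partindex := \vecb^\partindex - \min_{\vecx \in \X} A^\partindex \vecx$, taken componentwise. Each component is a minimum of a linear function over the compact set $\X$, so it is finite and attained. Then define
\[
\X' := \bigl\{ (\vecx, \vecs) : \vecx \in \X,\ \mathbf{0} \le \vecs^\partindex \le \vecu^\partindex \ \forall\, \partindex \in [\numpartitions] \bigr\}.
\]
This set is compact, being a product of the compact set $\X$ with a box. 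The linking constraints become the equalities $A^\partindex \vecx + \vecs^\partindex = \vecb^\partindex$, with zero objective coefficient on $\vecs$.

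\textbf{Step 1: the reformulation is exact.} I first check that $\min\{\vecc^\top\vecx : A\vecx \le \vecb,\ \vecx \in \conv(\X)\}$ equals the equality-form bound over $\conv(\X')$. There are two things to verify.
\begin{itemize}
\item Since $\X'$ is a product, $\conv(\X') = \conv(\X) \times [\mathbf{0}, \vecu]$.
\item If $\vecx \in \conv(\X)$ satisfies $A\vecx \le \vecb$, then the slack $\vecs = \vecb - A\vecx$ satisfies $\mathbf{0} \le \vecs \le \vecb - \min_{\X} A\vecx = \vecu$. Here I use that a linear function attains the same minimum over $\conv(\X)$ as over $\X$.
\end{itemize}
So the slack bounds cut off nothing, and the two problems have the same feasible $\vecx$-points and the same value. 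Call this common value $\zCG$.

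\textbf{Step 2: rerun the equality-case argument.} The data are now $\X'$, the linking matrix $[A \ \ I]$, and objective $(\vecc, \mathbf{0})$, which is exactly the setting of Section~\ref{sec:general_cg_intro}. The standing assumptions carry over: $\X'$ is bounded, and the linking constraints are feasible over $\conv(\X')$ by Step 1. Therefore strong duality $\zLD = \zCG$ holds for the augmented problem. Theorem~\ref{thm:optimal_weights_exist} then applies verbatim and gives optimal weights $\allweights^*$ equal to the optimal multipliers of the augmented Lagrangian dual. These multipliers are sign-restricted, but the surrogate argument never used their sign. Next I go through the proof of Theorem~\ref{thm:quasigradient_convergence} line by line, with residual $\qgradvector{\partindex}^{(t)} = \vecb^\partindex - A^\partindex\vecx^{(t)} - \vecs^{\partindex,(t)}$:
\begin{itemize}
\item The orthogonality relation \eqref{eq:orthogonality_condition} is still just feasibility of $(\vecx^{(t)}, \vecs^{(t)})$ in the aggregated problem.
\item The gap bound \eqref{eq:dual_gap_bound} follows by plugging $(\vecx^{(t)}, \vecs^{(t)}) \in \conv(\X')$ into the Lagrangian minimum. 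Since the slack has zero cost, the identity $\vecc^\top\vecx^{(t)} = \zweighted{\allweights^{(t)}}$ is unchanged.
\item The telescoping and Robbins--Monro steps are purely algebraic and carry over.
\end{itemize}

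\textbf{Main obstacle.} The one place needing care is the uniform bound $M$ on the residuals. In the equality case this came from compactness of $\X$. Here it comes from $\|\vecb - A\vecx - \vecs\|$ being continuous on the compact set $\X'$, and this is exactly why the slack upper bounds \eqref{eq:slack_bounds} are imposed. Without them the iterate $\vecs^{(t)}$ could be unbounded, since aggregation lets large slacks cancel against other terms, and $M$ would fail to exist. The iterates $\vecx^{(t)}$ lie in $\conv(\X)$ rather than $\X$, which is harmless because a continuous function is bounded on $\conv$ of a compact set. The $\ge$ case is handled symmetrically by writing $A^\partindex\vecx - \vecs^\partindex = \vecb^\partindex$ with $\mathbf{0} \le \vecs^\partindex \le \max_{\X} A^\partindex\vecx - \vecb^\partindex$.
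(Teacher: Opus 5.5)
Your proposal is correct and takes the same route as the paper: you fold the bounded slacks into an augmented compact set, so the equality-case proofs of Theorems~\ref{thm:optimal_weights_exist} and~\ref{thm:quasigradient_convergence} go through verbatim, and the slack bounds \eqref{eq:slack_bounds} supply the uniform residual bound $M$. Your write-up is more careful than the paper's brief justification, since you also check that the slack bounds cut off no feasible point and that evaluating the Lagrangian at $\vecx^{(t)} \in \conv(\X)$ rather than $\X$ is legitimate.
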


\section{Application to Two-Stage Stochastic Mixed-Integer Programs}
\label{sec:applications}

In this section, we review the application of Dantzig--Wolfe decomposition to 
two-stage SMIPs and adapt the aggregated constraint generation approach to this setting. 

Recall that the formulation \eqref{form:saa_nac} contains the consensus variable $\consensusvar$ and scenario variables
$(\vecx_\scenario,\vecy_\scenario)$ for $\scenario \in [\scenarios]$. The constraints preserved in
the RMP of the Dantzig--Wolfe relaxation include
the Cartesian product of the scenario feasible sets and the first-stage polyhedral constraints on the consensus
variables:
\[
\X = \mathcal{D}_1 \times \cdots \times \mathcal{D}_\scenarios \times \mathcal{Q}.
\]
The nonanticipativity constraints \eqref{eq:nac} correspond to the
linking constraints \eqref{eq:linking} in the general Dantzig--Wolfe formulation. 
To match the assumptions in Section \ref{sec:cg_background}, we assume
the sets $\mathcal{D}_k$ for $\scenario \in [\scenarios]$ and the set $\mathcal{Q}$  are all bounded, so that $\mathcal{X}$ is bounded.

In the SMIP setting, separability of the set $\X$ can be exploited to implement the convexification of $\X$ with the
convexification of each scenario set separately.
Let $\Pblock{\scenario} = \ext(\conv(\cgsubproblem))$ denote the extreme-point
set for scenario $\scenario$, and let $(\vecx_\vertex,\vecy_\vertex)$ denote the
first- and second-stage components of vertex $\vertex \in \Pblock{\scenario}$.
The resulting Dantzig--Wolfe relaxation is
\begin{mini!}
    {\consensusvar, \veclambda}
    {\sum_{\scenario=1}^\scenarios \scenprob \sum_{\vertex \in \Pblock{\scenario}}
        \Bigl(\vecc^\top \vecx_\vertex + \vecq_\scenario^\top \vecy_\vertex\Bigr) \lambda_{\scenario,\vertex}}
    {\label{form:cg_master}}{\zCG =}
    \addConstraint{\sum_{\vertex \in \Pblock{\scenario}} \vecx_\vertex\, \lambda_{\scenario,\vertex}}{= \consensusvar,
        \quad \forall\, \allscenarios \label{eq:cg_nac}}
    \addConstraint{\sum_{\vertex \in \Pblock{\scenario}} \lambda_{\scenario,\vertex}}{= 1,
        \quad \forall\, \allscenarios \label{eq:cg_convexity}}
	\addConstraint{\consensusvar \in \mathcal{Q}}
    \addConstraint{\lambda_{\scenario,\vertex}}{\ge 0, \quad \forall\, \allscenarios,\; \vertex \in \Pblock{\scenario}.}
\end{mini!}
The constraints $\consensusvar \in \mathcal{Q}$ are redundant in this formulation, but we include them because they are 
not necessarily  redundant in an aggregation-based relaxation.
For each scenario $\scenario \in [\scenarios]$, let
$\vecpi_\scenario \in \mathbb{R}^{\numcontvarsone}$ denote the
dual multipliers associated with the NACs \eqref{eq:cg_nac} for that scenario, and let
$\pi_{0,\scenario} \in \mathbb{R}$ denote the dual multiplier associated with
the corresponding convexity constraint \eqref{eq:cg_convexity}.
Because the RMP convexifies each $\cgsubproblem$ separately, pricing also
separates by scenario.
The pricing subproblem for scenario $\scenario$ is
\begin{equation}
\label{eq:cg_pricing_stochastic}
\min_{(\vecx,\vecy) \in \cgsubproblem}
\Bigl\{ \scenprob \bigl( \vecc^\top \vecx + \vecq_\scenario^\top \vecy \bigr) - \vecpi_\scenario^\top \vecx - \pi_{0,\scenario} \Bigr\}.
\end{equation}
The column-generation procedure for~\eqref{form:cg_master} follows the standard
scheme.
At each iteration, the RMP is solved over working column sets
$\Pblock{\scenario}' \subseteq \Pblock{\scenario}$ for all
$\scenario \in [\scenarios]$.
If a column with negative reduced cost is found as the solution to~\eqref{eq:cg_pricing_stochastic}, it is
added to the corresponding working set.
When no improving column exists for any scenario, the current RMP solution
attains $\zCG$.

A common assumption in two-stage SMIP is {\it relatively complete recourse}: for every solution $\vecx \in \mathcal{Q} \cap
(\firststagespace)$ which satisfies the
first-stage constraints, there is a second-stage solution $\vecy_k$ such that $(\vecx,\vecy_k) \in \mathcal{D}_k$ for every $k \in
[\scenarios]$. When this assumption is satisfied,
an initial set of columns that leads to a feasible RMP can be obtained by choosing some $\hat{\vecx} \in
\mathcal{Q}\cap(\firststagespace)$ (e.g., by solving the scenario subproblem
\eqref{eq:cg_pricing_stochastic} with $\vecpi_\scenario = 0$ for some $\scenario \in [\scenarios]$) and then solving the
second-stage problem (feasible by the relatively complete recourse assumption): 
\[ \min_\vecy \{ \vecq_\scenario^\top \vecy : (\hat{\vecx},\vecy) \in \cgsubproblem \} \]
to obtain $\hat{\vecy}_k$ for $k \in [\scenarios]$. Then setting $\mathcal{P}'_k = \{ (\hat{\vecx},\hat{\vecy}_k) \}$ for $k \in
[\scenarios]$ leads to a feasible RMP. 

We now specialize the aggregation framework of Section~\ref{sec:aggregate_constraints}
to two-stage SMIPs.
In this setting, the nonanticipativity constraints provide a natural aggregation structure:
grouping them by first-stage variable reduces the number of RMP linking constraints
from one per variable-scenario pair to one per first-stage variable.
We then develop partition refinement criteria and weight update rules that exploit this structure.

\subsection{Initial Group Structure of the Nonanticipativity Constraints}
\label{sec:nac_structure}

Recall from~\eqref{form:saa_nac} that the NACs require
$x_{\firststageindex,\scenario} = \cvarcomp_\firststageindex$
for every $\firststageindex \in [\totalfirststage]$
and scenario $\scenario \in [\scenarios]$.
Thus, the column-generation RMP contains
$m = (\totalfirststage) \cdot \scenarios$ linking constraints.
We partition these constraints by first-stage decision variable:
group $\constraintgroup{\firststageindex}$ contains the NACs that link the
$\firststageindex$th first-stage decision variables across all scenarios.
Applying the weighted aggregation~\eqref{eq:weighted_agg_general} to this group
replaces the $\scenarios$ NACs for each component with a single
surrogate constraint
\begin{equation}
\sum_{\scenario=1}^\scenarios w_{\firststageindex,\scenario} x_{\firststageindex,\scenario}
= \Bigl(\sum_{\scenario=1}^\scenarios w_{\firststageindex,\scenario}\Bigr) \cvarcomp_\firststageindex,
\qquad \forall \firststageindex \in [\totalfirststage].
\label{eq:weighted_nac}
\end{equation}
This construction yields $\numpartitions = \totalfirststage$ groups. 

Substituting the weighted NACs~\eqref{eq:weighted_nac} into the
Dantzig--Wolfe relaxation yields the following initial aggregated master problem:
\begin{mini!}
    {\consensusvar, \veclambda}
    {\sum_{\scenario=1}^\scenarios \scenprob \sum_{\vertex \in \Pblock{\scenario}}
        \Bigl(\vecc^\top \vecx_\vertex + \vecq_\scenario^\top \vecy_\vertex\Bigr) \lambda_{\scenario,\vertex}}
    {\label{form:agg_cg_master}}{}
    \addConstraint{\sum_{\scenario=1}^\scenarios w_{\firststageindex,\scenario}
        \sum_{\vertex \in \Pblock{\scenario}} x_{\vertex,\firststageindex}\, \lambda_{\scenario,\vertex}}
        {= \Bigl(\sum_{\scenario=1}^\scenarios w_{\firststageindex,\scenario}\Bigr) \cvarcomp_\firststageindex,
        \quad \forall\, \allfirststageindices \label{eq:agg_cg_nac}}
    \addConstraint{\sum_{\vertex \in \Pblock{\scenario}} \lambda_{\scenario,\vertex}}{= 1,
        \quad \forall\, \allscenarios \label{eq:agg_cg_convexity}}
		  \addConstraint{\consensusvar \in \mathcal{Q}}
    \addConstraint{\lambda_{\scenario,\vertex}}{\ge 0, \quad \forall\, \allscenarios,\; \vertex \in \Pblock{\scenario}.}
\end{mini!}

The corresponding pricing subproblem follows from the effective dual multipliers induced by the aggregated constraints.
Let $\mu_\firststageindex$ denote the dual multiplier associated with the aggregated constraint~\eqref{eq:agg_cg_nac}
for first-stage decision variable $\firststageindex \in [\totalfirststage]$.
Because $x_{\vertex,\firststageindex}$ from scenario $\scenario \in [\scenarios]$
enters~\eqref{eq:agg_cg_nac} with coefficient $w_{\firststageindex,\scenario}$,
the effective multiplier~\eqref{eq:effective_dual_general}
for the NAC of component $\firststageindex$ in scenario $\scenario$ is
$\pi_{\firststageindex,\scenario}^{\text{eff}} = \mu_\firststageindex\, w_{\firststageindex,\scenario}$.
Substituting this effective multiplier into the general pricing subproblem~\eqref{eq:weighted_pricing_general},
the pricing subproblem for scenario $\scenario \in [\scenarios]$ is
\begin{equation}
\label{eq:agg_pricing_stochastic}
\min_{(\vecx,\vecy) \in \cgsubproblem}
\Bigl\{ \scenprob \bigl( \vecc^\top \vecx + \vecq_\scenario^\top \vecy \bigr)
- \sum_{\firststageindex=1}^{\totalfirststage}
\mu_\firststageindex\, w_{\firststageindex,\scenario}\, x_\firststageindex
- \pi_{0,\scenario} \Bigr\},
\end{equation}
where $\pi_{0,\scenario}$ is the dual multiplier associated with the convexity
constraint~\eqref{eq:agg_cg_convexity}.

\subsection{Partition Refinement}
\label{sec:nac_variance_refinement}

Applying the partition refinement framework of Section~\ref{sec:refinement_general}
we begin with the initial partition used in \eqref{form:agg_cg_master} and use unit weights $\vecw_\scenario = \mathbf{1}$ for all
$\scenario \in [\scenarios]$.
At each iteration, the algorithm 
obtains a partition $\partition^{(t)}_\firststageindex$ of the scenario set
$[\scenarios]$
for each first-stage component $\firststageindex \in [\totalfirststage]$. 
This partition is initialized with the coarsest scenario partition
$\partition_\firststageindex^{(0)} = \{[\scenarios]\}$ for every
$\firststageindex \in [\totalfirststage]$.
A block $G \in \partition^{(t)}_\firststageindex$ represents the NACs $x_{\firststageindex,\scenario} = \cvarcomp_\firststageindex$ for $\scenario \in G$.
With unit weights, the resulting aggregated NACs have the structure 
\begin{equation}
\label{eq:partitioned_nac}
\sum_{\scenario \in G} x_{\firststageindex,\scenario}
=
|G|\,\cvarcomp_\firststageindex,
\qquad
\forall\, \firststageindex \in [\totalfirststage],\;
G \in \partition^{(t)}_\firststageindex ,
\end{equation}
which replaces \eqref{eq:agg_cg_nac} in formulation \eqref{form:agg_cg_master}.
At a general refinement iteration $t$, the number of linking constraints is
$\sum_{\firststageindex=1}^{\totalfirststage} |\partition_\firststageindex^{(t)}|$.

At iteration $t$ of the partition refinement algorithm, let  $(\zt,\veclambda^{(t)})$ be the solution
of the RMP (after column generation terminates) and let
\begin{equation}
\label{eq:itertxk}
 \vecx_{\scenario}^{(t)} = \sum_{\vertex \in \Pblock{\scenario}'} \vecx_\vertex
\lambda_{\scenario,\vertex}^{(t)}, \quad \scenario \in [\scenarios]. 
\end{equation}
We call a block
$G \in \partition_\firststageindex^{(t)}$ a candidate for refinement if at least
one scenario in the block violates its NAC, that is, if
$x_{\firststageindex,\scenario}^{(t)} \neq \cvarcomp^{(t)}_\firststageindex$ for some
$\scenario \in G$.
Among candidate blocks, we prioritize those with the largest within-block disagreement.
For a candidate block $G \in \partition_\firststageindex^{(t)}$, we measure this
disagreement by the mean squared NAC residual
\begin{equation}
\label{eq:variance_measure}
v_{\firststageindex,G}^{(t)}
=
\frac{1}{|G|}
\sum_{\scenario \in G}
\Bigl(
x^{(t)}_{\firststageindex,\scenario}
-
\cvarcomp^{(t)}_\firststageindex
\Bigr)^2 .
\end{equation}
At each refinement iteration, we rank candidate blocks over all $\firststageindex \in [\totalfirststage]$ by $v_{\firststageindex,G}^{(t)}$ and select a fixed fraction of the blocks with the largest disagreement for refinement.
Each selected block is split according to the sign of the individual NAC residuals.
For a selected block $G \in \partition_\firststageindex^{(t)}$, define
\begin{equation}
\label{eq:binary_partition}
G^- =
\Bigl\{
\scenario \in G :
x^{(t)}_{\firststageindex,\scenario}
<
\cvarcomp^{(t)}_\firststageindex
\Bigr\},
\qquad
G^+ =
\Bigl\{
\scenario \in G :
x^{(t)}_{\firststageindex,\scenario}
\ge
\cvarcomp^{(t)}_\firststageindex
\Bigr\}.
\end{equation}
In the next partition, each selected block $G$ is replaced by the 
sets $G^-$ and $G^+$.
This separates scenario copies below and above the value assigned to the consensus variable $\cvarcomp_\firststageindex$, preventing positive and negative residuals from canceling within the same aggregate in later iterations.

\subsection{Quasi-Gradient Weight Updates for Nonanticipativity Constraints}
\label{sec:nac_quasigradient}

We now specialize the quasi-gradient method of Section~\ref{sec:adaptive_weights}
to the NACs of two-stage SMIPs.
We first describe the extension using the NACs as implemented in~\eqref{eq:nac}. We then discuss an alternative
implementation of the NACs, which leads to a slightly different version of the quasi-gradient weight update method. 

\subsubsection{Consensus-Based Formulation}
\label{sec:nac_consensus}
For the consensus-variable NACs~\eqref{eq:nac},
let $(\zt,\veclambda^{(t)})$ be the solution
of the RMP (after column generation terminates) and let
$\vecx^{(t)}_{\scenario}, \scenario \in [\scenarios]$ be as defined in \eqref{eq:itertxk}.
The quasi-gradient~\eqref{eq:quasigradient_vector} then specializes to
\begin{equation}
\label{eq:quasigradient_nac}
g_{\firststageindex,\scenario}^{(t)} = \bar{x}^{(t)}_\firststageindex - x_{\firststageindex,\scenario}^{(t)},
\end{equation}
and the weight update~\eqref{eq:weight_update_vector} becomes
\begin{equation}
\label{eq:weight_update_nac}
w_{\firststageindex,\scenario}^{(t+1)} = w_{\firststageindex,\scenario}^{(t)} + \alpha_t
g_{\firststageindex,\scenario}^{(t)}, \quad \forall \firststageindex \in [n_1], \scenario \in [\scenarios]
\end{equation}
where $\alpha_t > 0$ satisfies the Robbins--Monro conditions~\eqref{eq:step_size_conditions}.

\subsubsection{Average-Based Formulation}
\label{sec:nac_average}
An alternative form of the NACs eliminates the consensus variables $\consensusvar$.
For each first-stage component, nonanticipativity is equivalent to requiring every scenario copy
to equal its average over all scenarios:
\begin{equation}
\label{eq:nac_average}
x_{\firststageindex,\scenario} = \sum_{\scenario'=1}^\scenarios p_{\scenario'} x_{\firststageindex,\scenario'},
\qquad \forall\, \firststageindex \in [\totalfirststage],\; \allscenarios.
\end{equation}
When using \eqref{eq:nac_average}, the consensus variable $\consensusvar$ is no longer needed and likewise the
constraints $\consensusvar \in \mathcal{Q}$ are excluded from the RMP.

For each first-stage component $\firststageindex \in [\totalfirststage]$,
aggregating the $\scenarios$ constraints in~\eqref{eq:nac_average}
with weights $\weightvector{\firststageindex}
= (w_{\firststageindex,1}, \ldots, w_{\firststageindex,\scenarios})^\top$ gives
\begin{equation}
\label{eq:weighted_nac_average}
\sum_{\scenario=1}^\scenarios w_{\firststageindex,\scenario}\, x_{\firststageindex,\scenario}
= \Bigl(\sum_{\scenario=1}^\scenarios w_{\firststageindex,\scenario} \Bigr)
  \sum_{\scenario'=1}^\scenarios p_{\scenario'} x_{\firststageindex,\scenario'},
\qquad \forall\, \firststageindex \in [\totalfirststage].
\end{equation}
This form is especially useful when the weights sum to zero.
If $\sum_{\allscenarios} w_{\firststageindex,\scenario} = 0$, then the right-hand side of~\eqref{eq:weighted_nac_average} vanishes, and the aggregate reduces to
\begin{equation}
\label{eq:weighted_nac_zerosum}
\sum_{\scenario=1}^\scenarios w_{\firststageindex,\scenario}\, x_{\firststageindex,\scenario} = 0,
\qquad \forall\, \firststageindex \in [\totalfirststage].
\end{equation}
The constraint in~\eqref{eq:weighted_nac_zerosum} is sparse whenever
the weight vector
$\weightvector{\firststageindex}
= (w_{\firststageindex,1}, \ldots, w_{\firststageindex,\scenarios})^\top$
is sparse, because only scenario copies with nonzero weights appear in the
left-hand side.
With $x^{(t)}_{\scenario}, \scenario \in [\scenarios]$  as defined in \eqref{eq:itertxk} after
solving the RMP at iteration $t$,
the quasi-gradient associated with~\eqref{eq:nac_average} is defined by the corresponding
residuals
\begin{equation}
\label{eq:quasigradient_zerosum}
g_{\firststageindex,\scenario}^{(t)}
= \sum_{\scenario'=1}^\scenarios p_{\scenario'} x^{(t)}_{\firststageindex,\scenario'}
  - x^{(t)}_{\firststageindex,\scenario} \quad \forall i \in [n_1], \scenario \in [\scenarios].
\end{equation}
Because these residuals sum to zero across scenarios,
\[
\sum_{\scenario=1}^{\scenarios} g_{\firststageindex,\scenario}^{(t)} = 0
\qquad \forall\, \firststageindex \in [\totalfirststage],
\]
the update~\eqref{eq:weight_update_nac} preserves the zero-sum property.
Specifically, if $\sum_{\allscenarios} w_{\firststageindex,\scenario}^{(0)} = 0$, then $\sum_{\allscenarios} w_{\firststageindex,\scenario}^{(t)} = 0$ for every iteration $t$.
Consequently, initializing Algorithm~\ref{alg:quasigradient_weighted} with zero-sum weights keeps the aggregated NAC in the simplified form~\eqref{eq:weighted_nac_zerosum} throughout the algorithm.


In our preliminary numerical study, we found that using the average-based NACs initialized with $w_{\scenario}^{(0)} = 0$
yielded better performance than using the consensus-based formulation in Algorithm~\ref{alg:quasigradient_weighted}.

\section{Computational Study}
\label{sec:computational_results}

In this section we report results of experiments evaluating the proposed ACG methods and benchmark algorithms.
Section~\ref{sec:test_problems} describes the test problems, Section~\ref{sec:exp_design} defines the experimental goals and performance
metrics, and
Section~\ref{sec:algorithm} gives the implementation details.
In Section \ref{sec:exact_bound_sensitivity} we investigate the impact of the proportion of groups to refine on the
partition refinement method. 
Sections~\ref{sec:exact_bound_experiments} and~\ref{sec:fast_bound_experiments}
then report two sets of experiments: exact recovery of the nonanticipativity
bound $\znac$ and fast lower-bound generation under limited work budgets.

\subsection{Test Problems}
\label{sec:test_problems}

The experiments use instances from two SMIP problems.
The first test problem is the Stochastic Generalized Assignment Problem (SGAP) which is based on an application
motivated by dynamic assignment of computing jobs to servers introduced in \cite{kothari2026}.
This problem class has pure binary decisions in both stages: first-stage variables
assign currently active jobs to machines, and second-stage variables assign jobs
under sampled future system trajectories.
The formulation
and description of instances are in Appendix~\ref{sec:sgap_formulation}. The test instances have between 45 and 215 first-stage variables, between 95 and 495
second-stage variables, and $K \in \{50,100\}$ scenarios.

The second test problem is the Stochastic Dynamic Capacity Acquisition and Assignment Problem (DCAP) 
drawn from SIPLIB \citep{siplib2015}, which models capacity acquisition and task assignment under uncertain demand.
Appendix~\ref{sec:dcap_formulation} gives the full formulation.
The first-stage decision variables are mixed-integer: variables to represent continuous resource capacity expansion
over multiple periods, and binary variables to record fixed charges when capacity is acquired.
The second stage has pure binary decisions for assigning tasks to the acquired capacity.
These instances have between 12 and 64 first-stage variables, between 27 and 240
second-stage variables, and between 50 and 500 scenarios.


\subsection{Methods and Performance Metrics}
\label{sec:exp_design}

We consider the following column generation based methods in our computational study.
\begin{description}[style=nextline]
\item[\variant{Standard}]
Column generation with the fully disaggregated NAC system, using simplex to solve
each RMP.

\item[\variant{Barrier}]
The same fully disaggregated formulation as \variant{Standard}, but with each
RMP solved by barrier with crossover disabled.
Interior-point RMP solves are a known form of dual stabilization in column
generation because they tend to return better-centered dual solutions
\citep{rousseau2007interior,gondzio2013new}.
This variant isolates that stabilization effect while keeping the RMP 
formulation unchanged.

\item[\variant{Partition}]
ACG with unit weights and the variance-based refinement rule of Section~\ref{sec:nac_variance_refinement}. At each outer
iteration, column generation is run to convergence on the current partition before violated blocks are refined.  

\item[\variant{Dyn-SAR}]
The dynamic separation of aggregated rows method \citep{costa2022stabilized}, adapted to aggregate NACs. Unlike \variant{Partition}, \variant{Dyn-SAR} retains previously generated aggregated rows while adding disaggregated rows for selected violations.
We solve the RMP with Gurobi's default LP algorithm to
leverage warm starts after rows are added.

\item[\variant{Quasi-gradient}]
Uses a fixed NAC partition and updates only the aggregation weights, as
described in Section~\ref{sec:nac_quasigradient}. 
\end{description}

For additional context, we also include the following non column-generation methods in our study:

\begin{description}
\item[\variant{FW--PH}]
The Frank--Wolfe progressive hedging method \cite{boland2018combining}; see Appendix~\ref{sec:ph_algorithm}. This method computes $\znac$ from the Lagrangian side: it maintains inner approximations of the scenario feasible-set convex hulls and updates scenario multipliers to enforce nonanticipativity.

\item[\variant{Subgradient}]
Applies subgradient optimization directly to the Lagrangian dual described in
Section~\ref{sec:general_cg_intro}.
\end{description}
\variant{FW--PH} has similarities to column generation because it also iteratively constructs an inner approximation of
the convex hull of each scenario set, but differs in that it does not solve a RMP to perform dual
updates.
We include \variant{Subgradient} as a comparison due to its similarity to \variant{Quasi-gradient} -- the weight updates
are similar except that \variant{Quasi-gradient} solves a RMP to inform the updates.

Table~\ref{tab:metrics} defines the performance metrics used in the computational study.
Unless otherwise stated, reported values are shifted geometric means with shift
parameter 1, computed over the relevant instance set.
The exceptions are the instance count metrics (\variant{Converged} and \variant{Reach}) and \variant{NAC\%}, for which we report the arithmetic mean. 
\begin{table}[ht]
  \centering
  \caption{Metrics reported in the computational results.}
  \label{tab:metrics}
  \begin{tabular}{lp{0.72\linewidth}}
    \toprule
    Metric & Description \\
    \midrule
    \variant{Converged}    & Number of instances on which the method satisfies its convergence criterion within the work-unit budget, reported as ``$c/N$'' \\
    \variant{Tot-W}         & Total Gurobi work units consumed \\
    \variant{RMP-W}       & Work units spent solving RMPs \\
    \variant{Sub-W}          & Work units spent solving scenario pricing subproblems \\
    \variant{Iters}        & Number of pricing iterations \\
    \variant{Refs}         & Number of partition refinement rounds \\
    \variant{NAC\%}        & Percentage of NACs represented explicitly in the RMP, relative to the full NAC system \\
    \variant{Columns}      & Total number of pricing columns added to the RMP\\
    \variant{Reach}        & Number of instances on which the method reaches a prescribed optimality-gap threshold within the work-unit budget \\
    \bottomrule
  \end{tabular}
\end{table}

We report computational effort in Gurobi work units rather than wall-clock or CPU time.
Work units provide a deterministic measure of solver effort for a fixed model and parameter setting, whereas wall-clock
and CPU time depend on the hardware and system load.
The experiments were run on the Center for High Throughput Computing (CHTC) at the
University of Wisconsin--Madison~\cite{chtc}.
Because this is a heterogeneous set of machines, work units provide a more consistent basis for comparing methods.
The Work columns in the tables should therefore be interpreted as solver effort, analogous to CPU time but less
hardware-dependent. Total work for a method (\variant{Tot-W}) is the sum of work units spent solving all problems in the method,
including the RMP (\variant{RMP-W}) and column generation subproblems (\variant{Sub-W}).
On an individual instance \variant{Tot-W} will be the sum of
\variant{RMP-W} and \variant{Sub-W}. However, due to the use of geometric means, this sum property is not preserved when
reporting aggregate results over sets of instances.

\subsection{Implementation Details}
\label{sec:algorithm}

All optimization problems are solved with Gurobi 11.0.
SGAP runs use four Gurobi threads. On the DCAP test instances, in preliminary experiments we found that the relationship between CPU time and Gurobi work units was less reliable when using four
Gurobi threads. Thus, we restrict Gurobi to one thread on the DCAP test instances.
Unless otherwise stated, RMPs are solved with the barrier
method (\texttt{Method=2}) and crossover disabled.
The exceptions are \variant{Standard}, which uses simplex, and
\variant{Dyn-SAR}, which uses Gurobi's default LP algorithm to leverage warm
starts after row additions.
We add a candidate column whenever its reduced cost is below $-\epsilon$, with
$\epsilon = 10^{-6}$.
Each scenario pricing problem is solved as a MIP with relative optimality gap
$0.1\%$.

All column-generation variants use the same initialization.
We first solve a single-scenario problem for an arbitrary scenario $\onescenario \in [\scenarios]$, obtaining a feasible first-stage solution $\startx$ and a corresponding recourse solution.
We then fix $\vecx = \startx$ and solve the recourse problem $\hat{\vecy}_{\scenario} = \argmin\{\vecq_{\scenario}^\top
\vecy_{\scenario} : (\vecx, \vecy_{\scenario}) \in \cgsubproblem,\; \vecx = \startx\}$ for each remaining scenario $\scenario \in [\scenarios] \setminus \{\onescenario\}$.
The resulting initial column for scenario $\scenario$ is $(\startx,\hat{\vecy}_{\scenario})$.
Because these columns share the same first-stage solution $\startx$, they satisfy the NACs and provide a feasible
initial RMP for both the disaggregated and aggregated formulations.

Stopping rules are method-specific.
For refinement-based ACG variants, a block is considered for disaggregation only if it contains a residual exceeding $\epsilon_{\mathrm{NAC}} = 10^{-3}$ in absolute value.
We use the same refinement rule for \variant{Partition} and \variant{Dyn-SAR}, as determined based on experiments in
reported in Section \ref{sec:exact_bound_sensitivity}.
For \variant{FW--PH}, we stop when the maximum first-stage disagreement across scenarios is at most $\epsilon_{\mathrm{FW\text{-}PH}} = 10^{-2}$.
Also in \variant{FW--PH}, we use parameter $\gamma = 1.0$ for SGAP and $\gamma = 20$ for DCAP (each of these chosen
after initial experiments).

Across all column-generation variants, we maintain a single persistent RMP model and modify it incrementally rather than rebuilding it between solves.
This saves model building time and, when solved with simplex, allows the solver to warm-start each re-optimization from the retained model state.
During partition refinement, we remove and re-add only the aggregated
constraints associated with refined groups; all other aggregated constraints,
columns, and convexity constraints remain unchanged.
For \variant{Quasi-gradient}, we update the coefficients of the aggregated NACs
in place after each iteration.
We use the average-based formulation of the NACs and initial weight vector $\vecw^{(0)} = 0$.

\subsection{Effect of the Refinement Fraction}
\label{sec:exact_bound_sensitivity}

In the partition refinement algorithm, we use a refinement fraction parameter $\rho$ the determines the fraction of
candidate blocks to disaggregate at
each outer iteration.
Smaller values of $\rho$ keep the RMP more compact, whereas larger
values refine more groups at each step and increase the number of linking
constraints more quickly.
We test $\rho \in \{0.1, 0.25, 0.5, 0.75, 1\}$.
The SGAP experiments use a work-unit budget of $7200$, and the DCAP experiments use a
work-unit budget of $3600$.
All other settings are held at the defaults in Section~\ref{sec:algorithm}.

\begin{table}[h]
\centering
\caption{Sensitivity to the refinement fraction $\rho$ on the SGAP subset.}
\label{tab:sgap_vp_sweep}
\begin{tabular}{lrrrrrrr}
\toprule
$\rho$ & \variant{Converged} & \variant{Tot-W} & \variant{RMP-W} & \variant{Sub-W} & \variant{Iters} & \variant{Refs} & \variant{NAC\%} \\
\midrule
0.1 & 9/12 &  146.4 & 3.5 & 145.4 & 62.8 & 25.9 &  \textbf{6.9} \\
0.25 & 9/12 &  131.0 & 3.8 & 129.7 & 55.3 & 13.7 &  7.8 \\
0.5 & 9/12 & \textbf{117.8} & 4.2 & \textbf{116.4} & \textbf{48.5} &  9.1 &  8.7 \\
0.75 & 9/12 &  130.2 & 5.1 & 128.4 & 52.6 &  7.0 &  9.2 \\
1.0 & 9/12 &  162.7 & 8.6 & 159.2 & 66.7 &  \textbf{6.5} &  9.8 \\
\bottomrule
\end{tabular}
\end{table}

Table~\ref{tab:sgap_vp_sweep} presents the results for the SGAP instances. 
The choice of $\rho$ affects the amount of work required to reach the bound. Smaller values
split fewer violated blocks at each refinement step, producing the most compact
final RMP but requiring more refinement rounds. Larger values refine more
aggressively, reducing the number of rounds but increasing the size
of the RMP more quickly. Splitting many groups at once adds many linking
constraints, so later column-generation phases can require more
pricing iterations. The best SGAP performance is obtained at the intermediate
value $\rho = 0.5$, which gives the lowest total work and the fewest pricing
iterations while still using only $8.7\%$ of the full NAC system. We therefore
use $\rho = 0.5$ as the default refinement fraction for SGAP in the remaining
experiments.

\begin{table}[h]
\centering
\caption{Sensitivity to the refinement fraction $\rho$ on the DCAP test set.}
\label{tab:dcap_vp_sweep}
\begin{tabular}{lrrrrrrr}
\toprule
$\rho$ & \variant{Converged} & \variant{Tot-W} & \variant{RMP-W} & \variant{Sub-W} & \variant{Iters} & \variant{Refs} & \variant{NAC\%} \\
\midrule
0.1 & 8/10 &  383.1 & 67.4 & 165.4 & 229.0 & 141.9 & \textbf{36.3} \\
0.25 & 8/10 &  242.3 & 48.9 & 111.9 & 153.3 &  56.7 & 38.4 \\
0.5 & 8/10 &  181.2 & 37.7 &  88.4 & 118.4 &  27.2 & 39.7 \\
0.75 & 8/10 & \textbf{160.8} & \textbf{34.1} & \textbf{82.2} & \textbf{108.6} &  17.1 & 39.7 \\
1.0 & 8/10 &  174.8 & 39.2 &  88.2 & 117.3 & \textbf{14.0} & 41.7 \\
\bottomrule
\end{tabular}
\end{table}

The DCAP results in Table~\ref{tab:dcap_vp_sweep} show a similar pattern.
The main difference is that DCAP benefits from more aggressive refinement.
Compared with SGAP, DCAP requires a larger fraction of the full NAC system to recover $\znac$.
A possible reason is that DCAP includes continuous first-stage variables, which allow scenario-specific first-stage solutions to differ by small amounts across many NACs.
Enforcing agreement may therefore require explicitly representing more NACs than in SGAP, where the first-stage assignment variables are all binary.
The best DCAP performance is obtained at $\rho = 0.75$, which gives the lowest
total work while keeping the master only modestly larger than with smaller
values of $\rho$.
We therefore use $\rho = 0.75$ as the default refinement fraction for DCAP in the remaining experiments.
Taken together, the experiments suggest that moderate refinement fractions
perform consistently well.
Values in the range $\rho \in [0.5,0.75]$ are either best or close to best on
both test problems.

\subsection{Exact-Bound Experiments}
\label{sec:exact_bound_experiments}

We next evaluate the performance of the methods in terms of the work required to compute the nonanticipativity bound $\znac$.
Each method is run to its own (near-optimal) convergence criterion.
We compare both the total work required to certify the bound and the cumulative
fraction of instances certified across the work-budget range. We exclude methods \variant{Quasi-gradient}
and \variant{Subgradient} in this study because they lack convergence checks that certify (near) attainment of the bound
$\znac$. 

Table~\ref{tab:convergence_exact}
reports convergence, work, and final-RMP statistics on the SGAP and DCAP test
sets.
Runs that do not reach $\znac$ within the work-unit budget are counted as not
converged and are charged the work-unit budget as their work in the mean calculations. For each test problem, the four
column generation based methods (\variant{Standard},\variant{Barrier},\variant{Dyn-SAR},\variant{Partition}) are grouped
together with \variant{FW--PH} reported last to emphasis that it is not a column generation method.

\begin{table}[h]
\centering
\caption{Convergence to the nonanticipativity bound $\znac$ and final RMP problem structure on the SGAP and DCAP test sets.}
\label{tab:convergence_exact}
\setlength{\tabcolsep}{5pt}
\begin{tabular}{lrrrrrrrr}
\toprule
Method & \variant{Converged} & \variant{Tot-W} & \variant{RMP-W} & \variant{Sub-W} & \variant{Iters} & \variant{Refs} & \variant{NAC\%} & \variant{Columns} \\
\midrule
\multicolumn{9}{l}{\emph{SGAP}} \\
\variant{Standard} &  9/24 & 407.9 & 141.0 & 184.5 &  84.9 &    --- & 100   & 4194.3 \\
\variant{Barrier}  & 19/24 & 265.1 &  45.8 & 227.0 & 158.4 &    --- & 100   & 7242.8 \\
\variant{Dyn-SAR}     & 21/24 &  62.7 &   8.1 &  57.2 &  40.3 &    \textbf{9.1} &  11.9  &  \textbf{1099.9} \\
\variant{Partition}   & 21/24 &  54.5 &   1.9 &  53.9 &  39.1 &    \textbf{9.1} &  \textbf{6.3}  & 1125.9 \\
\midrule
\variant{FW--PH}      & 21/24 &  \textbf{44.7} &   \textbf{1.3} &  \textbf{44.1} &  \textbf{24.4} &    --- & --- & --- \\
\midrule
\midrule
\multicolumn{9}{l}{\emph{DCAP}} \\
\variant{Standard} &  6/10 &  734.8 & 315.5 & 339.2 & 247.2 &    --- & 100   & 21196.4 \\
\variant{Barrier}  &  6/10 &  427.1 & 153.7 & 132.5 & 182.9 &    --- & 100   & 19045.9 \\
\variant{Dyn-SAR}     &  7/10 &  638.4 & 362.9 & 103.2 & 133.1 &   \textbf{17.4} & 81.3  & \textbf{12481.8} \\
\variant{Partition}   &  \textbf{8/10} &  \textbf{170.8} &  \textbf{40.0} &  \textbf{86.2} & \textbf{118.9} &   18.0 &  \textbf{40.7}  & 12580.3 \\
\midrule
\variant{FW--PH}      &  7/10 & 1691.2 & 315.7 & 407.2 & 677.2 &    --- & --- & --- \\
\bottomrule
\end{tabular}
\end{table}

The SGAP results show that \variant{FW--PH} has the lowest average total work,
but aggregation closes much of the gap between fully disaggregated column
generation and \variant{FW--PH}. \variant{Partition} certifies $\znac$ on $21$ of $24$
instances, matching the convergence count of \variant{FW--PH} and
\variant{Dyn-SAR}, and requires the least work among all column-generation
methods. Relative to \variant{Barrier}, which already benefits from
interior-point stabilization, \variant{Partition} reduces total work from
$265.1$ to $54.5$, RMP work from $45.8$ to $1.9$, and pricing iterations
from $158.4$ to $39.1$. Thus, aggregation improves performance in two ways: it
makes each RMP solve computationally less expensive, and it also reduces the
number of pricing iterations. The reduction in pricing iterations suggests that
aggregation also stabilizes the dual multipliers used by the pricing
subproblems.

The fully disaggregated column-generation baselines are limited by the size of
the NAC system. \variant{Standard} carries one linking constraint per NAC
throughout the algorithm, so the RMP is large from the first
iteration and the pricing subproblems are driven by a high-dimensional vector of
NAC multipliers.
\variant{Barrier} improves substantially over simplex-based column
generation because barrier without crossover returns better-centered dual
solutions \cite{rousseau2007interior,gondzio2013new}. However, it leaves the
NAC system fully disaggregated.
The lower pricing-iteration count of \variant{Standard} should not be
interpreted as faster convergence, because many simplex runs terminate early
after exhausting the work budget.

The comparison between \variant{Partition} and \variant{Dyn-SAR} shows that the
way aggregated rows are updated is important. Although both methods benefit from
aggregation, \variant{Partition} obtains a smaller final RMP: the number of NAC's added is only 
$6.3\%$ of the full NAC system, compared with $11.9\%$ for \variant{Dyn-SAR}.
This difference comes from replacing parent aggregates by their children rather
than retaining all previously generated aggregates. As a result,
\variant{Partition} reduces RMP effort from $8.1$ to $1.9$ and slightly
reduces total work, while generating essentially the same number of columns.

Overall, the SGAP results show that ACG closes most of the performance gap
between fully disaggregated column generation and \variant{FW--PH}. Although
\variant{FW--PH} has the lowest average total work on SGAP, \variant{Partition}
matches its convergence count and remains close in total work.

On the DCAP test set, no method converges on all ten instances, but
\variant{Partition} is the only method that certifies $\znac$ on $8$ of them.
It also has the lowest total work, RMP effort, subproblem effort, and
pricing-iteration count. 
The fully disaggregated baselines remain limited by the NAC system.
\variant{Barrier} reduces total work relative to \variant{Standard}, but the RMP work remains and the number of
iterations is larger than \variant{Partition}.

The comparison with \variant{Dyn-SAR} shows that aggregation is most
effective when the RMP remains compact. \variant{Dyn-SAR} improves the
convergence count relative to the fully disaggregated baselines, but it retains
previously generated aggregates while adding rows for selected violations. On
DCAP, this row-accumulation strategy produces a much denser final RMP:
the number of disaggregated constraints added by \variant{Dyn-SAR} is $81.3\%$ of the size of the full NAC system, compared with $40.7\%$ for \variant{Partition}. This larger RMP 
leads to substantially higher RMP effort despite a similar number of
generated columns.


\variant{FW--PH} is also less competitive on DCAP than on SGAP. It converges on
$7$ instances, but requires the largest total work and roughly five times as
many iterations as \variant{Partition}. One possible explanation is scaling:
DCAP has both continuous capacity-acquisition variables and binary fixed-charge
indicators in the first stage, so a single quadratic penalty parameter may not
balance disagreement across components as effectively as it does for the binary
assignment variables in SGAP. In contrast, \variant{Partition} does not require
a penalty parameter and performs more consistently on DCAP.

\subsection{Lower-Bound Experiments}
\label{sec:fast_bound_experiments}

We now study how quickly the methods produce high-quality lower bounds, rather than computational effort to (nearly)
solve the relaxation.
This setting is relevant when the 
dual bound is being used for bounding the optimality gap or within a branch-and-bound algorithm.

We compare the lower-bound performance of all methods. For each instance, we measure gaps
relative to the strongest lower bound obtained by any method within the work
budget (this strongest lower bound is equal to $\znac$ except for instances in which no method solved it within the work
limit). A method reaches threshold $\tau$ when its current lower bound is within
a relative factor of $\tau$ to that best bound.
If a method does not reach a threshold, it is charged the full work budget for
that instance.
Table~\ref{tab:lower_bound_quality}
reports the number of instances that reach each threshold and the shifted
geometric mean of the required work.

\begin{table}[h]
\centering
\caption{Lower-bound quality on the SGAP and DCAP test sets.}
\label{tab:lower_bound_quality}
\begin{tabular}{lrrrrrrrr}
\toprule
 & \multicolumn{2}{c}{$\tau = 10\%$} & \multicolumn{2}{c}{$\tau = 5\%$} & \multicolumn{2}{c}{$\tau = 1\%$} & \multicolumn{2}{c}{$\tau = 0.1\%$} \\
Method & \variant{Reach} & \variant{Tot-W} & \variant{Reach} & \variant{Tot-W} & \variant{Reach} & \variant{Tot-W} &
\variant{Reach} & \variant{Tot-W} \\
\midrule
\multicolumn{9}{l}{\emph{SGAP}} \\
\variant{Standard}        & 10 & 233.0 &  9 & 242.1 &  9 &  261.6 &  9 &  263.4 \\
\variant{Barrier}         & 17 & 253.9 & 17 & 265.4 & 17 &  284.7 & 17 &  314.6 \\
\variant{Dyn-SAR}            & \textbf{24} &   5.0 & \textbf{24} &  16.3 & 21 &   41.9 & 20 &   53.7 \\
\variant{Partition}          & \textbf{24} &   5.2 & \textbf{24} &  15.9 & 22 &   36.9 & 21 &   47.9 \\
\variant{Quasi-gradient}     & \textbf{24} &  \textbf{4.1} & \textbf{24} &  13.5 & 20 &   53.7 & 19 &   78.6 \\
\midrule
\variant{FW--PH}             & \textbf{24} &   4.5 & \textbf{24} &  \textbf{10.2} & \textbf{24} &  \textbf{17.9} & \textbf{23} &  \textbf{30.4} \\
\variant{Subgradient}        & 17 &  58.0 & 11 & 251.0 &  7 & 1316.4 &  6 & 2069.1 \\
\midrule
\midrule
\multicolumn{9}{l}{\emph{DCAP}} \\
\variant{Standard}        & \textbf{10} &   1.6 & \textbf{10} &   1.6 & 6 & 480.8 & 6 & 508.8 \\
\variant{Barrier}         & \textbf{10} &   1.6 & \textbf{10} &   1.6 & 6 & 220.0 & 6 & 245.2 \\
\variant{Dyn-SAR}            & \textbf{10} &   1.6 & \textbf{10} &   1.6 & \textbf{10} &  27.3 & \textbf{8} & 264.4 \\
\variant{Partition}          & \textbf{10} &   \textbf{1.6} & \textbf{10} &   \textbf{1.6} & \textbf{10} &  21.5 & \textbf{8} &  \textbf{68.7} \\
\variant{Quasi-gradient}     & \textbf{10} &   \textbf{1.6} & \textbf{10} &   \textbf{1.6} & \textbf{10} &  22.5 & \textbf{8} & 169.5 \\
\midrule
\variant{FW--PH}             & \textbf{10} &   \textbf{1.6} & \textbf{10} &   \textbf{1.6} & \textbf{10} &  \textbf{14.1} & \textbf{8} & 252.4 \\
\variant{Subgradient}        & 8 &   3.8 & 8 &   3.8 & 8 &  21.7 & 7 & 326.1 \\
\bottomrule
\end{tabular}
\end{table}

These results indicate that \variant{Quasi-gradient} is competitive with the other column generation methods at loose
thresholds, $\tau = 10\%$ and $\tau=5\%$. 
\variant{Quasi-gradient} requires slightly less work to reach these thresholds than the best of the other column
generation methods on the SGAP instances, and all column generation methods are equivalent at these thresholds on the
DCAP instances (this is because the initialization already yields a bound within $5\%$ on these instances).
However, the reach of \variant{Quasi-gradient} 
falls at tighter thresholds, indicating that weight updates alone do not
consistently recover the strongest bounds most quickly. The
refinement-based methods start slightly more slowly, but maintain better reach
at tight thresholds by enforcing nonanticipativity at a finer level. Between
the two refinement-based methods, \variant{Partition} outperforms
\variant{Dyn-SAR}, consistent with the exact-bound experiments and with its more
compact row-management strategy.

The comparison of \variant{Quasi-gradient} with \variant{Subgradient} highlights the benefit of
deriving the residuals used in the update steps from the aggregated RMP. Although both methods use
residual information to update weights/duals, \variant{Quasi-gradient} updates aggregation weights
using information obtained from solving a compact RMP via column generation, whereas \variant{Subgradient} updates Lagrangian
multipliers without maintaining a RMP. As a result,
\variant{Quasi-gradient} reaches the loose thresholds on all instances, while
\variant{Subgradient} reaches these thresholds on fewer instances and requires much more work as the
threshold tightens.

\variant{FW--PH} gives the strongest performance on the SGAP instances at tight thresholds,
reaching  $\tau = 0.1\%$ on nearly all instances and requiring the least
work at the two tightest thresholds.
For DCAP, at $\tau = 1\%$, \variant{FW--PH} requires the least work, showing that it can
produce strong lower bounds quickly even when certifying $\znac$ remains costly.
This suggests a long-tail convergence pattern of \variant{FW--PH} on DCAP: it reaches
moderately tight bounds quickly, but requires substantially more work to close
the remaining gap.


\section{Conclusion}
\label{sec:cg_conclusion}

This paper proposes an aggregated column generation framework for computing
dual bounds for two-stage stochastic mixed-integer programs with integer
recourse. The framework replaces groups of nonanticipativity constraints in the
Dantzig--Wolfe master with weighted surrogate constraints, reducing both the
cost of RMP solves and the dimension of the NAC multiplier space
used in pricing.

We studied two mechanisms for strengthening the aggregated relaxation.
Partition refinement starts from a coarse aggregation and iteratively splits
groups with violated NACs until the standard column-generation bound is
recovered. The weight update method instead keeps the partition fixed and uses
residual information to improve the bound without adding linking constraints.

The computational study indicates that the aggregation technique achieves benefits both in terms of reduced effort to
solve the RMP and to reduce the number of iterations required to obtain good dual bounds. Partition refinement appears
most effective at closing the gap to the full column generation bound, whereas the weight update method achieves bounds
within 5\% of the exact bound quickly but is slower to achieve full convergence. The results clearly indicate that the
work used to solve the restricted size RMP in the weight update method pays off in leading to faster convergence
compared to 
the subgradient method applied to solving the Lagrangian dual.

Several directions remain for future work. Partition refinement and
weight updates could be combined with each other, or with other techniques for enhancing column generation. 
It may be possible to improve the quasi-gradient method
with different step-size rules that accelerate 
 convergence. Finally, the
framework could be applied beyond stochastic programs to general decomposable
optimization problems with large linking systems, where constraint aggregation
may similarly reduce RMP size and stabilize pricing.

\begin{ackblock}
Luedtke acknowledges funding from the Air Force Office of Scientific
Research under award FA9550-25-1-0277 and from the Office of Naval Research under award N000142612067.
\end{ackblock}

\coiblock

\bibliographystyle{spmpsci}      
\bibliography{refs}

\appendix
\appendixheadings

\section{SGAP Formulation and Instance Details}
\label{sec:sgap_formulation}

Let $\jobst$ be a set of jobs to be assigned in the first stage.
Let $\pmachines$ be the set of physical machines, and let $\capacity_\machine$ denote the capacity of machine $\machine \in \pmachines$.
We also include an overflow machine $m_0$ with unlimited capacity to ensure feasibility, and define $\allmachines = \pmachines \cup \{m_0\}$.
Assigning job $\job \in \jobst$ to machine $\machine \in \allmachines$ incurs cost $\cost^\timestep_{\jobmachine}$ and, for $\machine \in \pmachines$, consumes $\resource^\timestep_{\jobmachine}$ units of capacity.

The second stage describes the system over the lookahead horizon $\lookaheadset := \{2,\ldots,\horizon\}$ under $\scenarios$ equally likely scenarios.
For scenario $\scenario \in \scenarioset$ and epoch $\epoch \in \lookaheadset$, let $J_\epoch^\scenario$ denote the jobs active at epoch $\epoch$, and let $\randomresource^{\epoch,\scenario}_{\jobmachine}$ denote the resource required by job $\job$ if assigned to machine $\machine$.
Define $\jobssecondk = \bigcup_{\epoch \in \lookaheadset} J_\epoch^\scenario$ as the scenario-dependent set of jobs that are active in at least one epoch of the lookahead horizon under scenario $\scenario$.
The first-stage variable $x^\timestep_{\jobmachine}$ is binary and indicates whether current job $\job \in \jobst$ is assigned to machine $\machine \in \allmachines$.
For each scenario $\scenario$, the second-stage variable $y^\scenario_{\jobmachine}$ is binary and indicates whether $\job \in \jobssecondk$ is assigned to machine $\machine \in \allmachines$.
For persisting jobs $\job \in \jobsintersectk$, the binary variable $z^\scenario_{\jobmachine}$ indicates whether assigning $\job$ to machine $\machine$ in scenario $\scenario$ requires a relocation relative to the first-stage assignment.
Relocations of persisting jobs incur cost $\penaltycost^\scenario_{\jobmachine}$, while assignments of new horizon arrivals $\job \in \jobssecondk \setminus \jobst$ incur cost $\randomcost^\scenario_{\jobmachine}$.
The extensive form minimizes the first-stage assignment cost plus the expected second-stage relocation and assignment costs:
\begin{mini!}|s|[constraintFormat]
    {}{\sum_{\job \in \jobst} \sum_{\machine \in \allmachines} \cost^{\timestep}_{\jobmachine}\, x^{\timestep}_{\jobmachine}
       + \frac{1}{\scenarios} \sum_{\scenario \in \scenarioset} \Bigl(
           \sum_{\job \in \jobsintersectk} \sum_{\machine \in \allmachines} \penaltycost^{\scenario}_{\jobmachine}\, z^{\scenario}_{\jobmachine}
         + \sum_{\job \in \jobssecondk \setminus \jobst} \sum_{\machine \in \allmachines} \randomcost^{\scenario}_{\jobmachine}\, y^{\scenario}_{\jobmachine}
       \Bigr)}{\label{eq:sgap_ef}}{}
    \addConstraint{\sum_{\machine \in \allmachines} x^{\timestep}_{\jobmachine}}{= 1}{\quad \forall\, \job \in \jobst \label{eq:sgap_assign_fs}}
    \addConstraint{\sum_{\job \in \jobst} \resource^{\timestep}_{\jobmachine}\, x^{\timestep}_{\jobmachine}}{\leq \capacity_{\machine}}{\quad \forall\, \machine \in \pmachines \label{eq:sgap_cap_fs}}
    \addConstraint{\sum_{\machine \in \allmachines} y^{\scenario}_{\jobmachine}}{= 1}{\quad \forall\, \job \in \jobssecondk,\; \scenario \in \scenarioset \label{eq:sgap_assign_ss}}
    \addConstraint{y^{\scenario}_{\jobmachine}}{\leq x^{\timestep}_{\jobmachine} + z^{\scenario}_{\jobmachine}}{\quad \forall\, \job \in \jobsintersectk,\; \machine \in \allmachines,\; \scenario \in \scenarioset \label{eq:sgap_nac_link}}
    \addConstraint{\sum_{\job \in J_\epoch^\scenario} \randomresource^{\epoch,\scenario}_{\jobmachine}\, y^{\scenario}_{\jobmachine}}{\leq \capacity_\machine}{\quad \forall\, \machine \in \pmachines,\; \epoch \in \lookaheadset,\; \scenario \in \scenarioset \label{eq:sgap_cap_ss}}
    \addConstraint{x^{\timestep}_{\jobmachine}}{\in \{0,1\}}{\quad \forall\, \job \in \jobst,\; \machine \in \allmachines \label{eq:sgap_binary_fs}}
    \addConstraint{y^{\scenario}_{\jobmachine}}{\in \{0,1\}}{\quad \forall\, \job \in \jobssecondk,\; \machine \in \allmachines,\; \scenario \in \scenarioset}
    \addConstraint{z^{\scenario}_{\jobmachine}}{\in \{0,1\}}{\quad \forall\, \job \in \jobsintersectk,\; \machine \in \allmachines,\; \scenario \in \scenarioset.}
\end{mini!}
Constraints~\eqref{eq:sgap_assign_fs} assign each current job to exactly one machine, and constraints~\eqref{eq:sgap_cap_fs} enforce first-stage capacities.
Constraints~\eqref{eq:sgap_assign_ss} assign each second-stage job to exactly one machine in each scenario, while constraints~\eqref{eq:sgap_cap_ss} enforce machine capacities at every epoch of the lookahead horizon.
Constraints~\eqref{eq:sgap_nac_link} activate $z^\scenario_{\jobmachine}$ whenever assigning a persisting job $\job$ to machine $\machine$ in scenario $\scenario$ requires a relocation relative to its first-stage assignment.

For the computational study, we generate SGAP instances using a discrete-event simulation with stochastic arrivals and
service durations described in detail in \cite{kothari2026}.
Arrivals follow a Poisson process calibrated to an average workload of $80\%$ of total physical capacity, and job durations have a mean of five epochs.
Resource requirements are sampled from a bimodal distribution with per-epoch multiplicative noise, and assignment costs are sampled uniformly from $[10,20]$.
The overflow penalty is set to ten times the mean assignment cost.
Each instance discards a 20-epoch warmup period before the first-stage decision is recorded.
These simulation parameters determine the first-stage job set and the scenario-dependent lookahead job sets used in the computational study.

The formulation includes $m$ physical machines and one overflow machine with infinite capacity.
The overflow machine ensures relatively complete recourse, although using it
incurs a large penalty cost.
At the decision epoch, if $|\jobst|$ jobs are active, the first stage contains
$|\jobst|(m+1)$ binary assignment variables.
Since NACs are imposed on these first-stage assignment variables across all $K$
scenarios, the full NAC system contains $|\jobst|(m+1)K$ constraints.
Because $|\jobst|$ is realized from the simulation, it varies across random
seeds.
Table~\ref{tab:sgap_instance_sizes} therefore reports average model sizes over
the seeds.
The first-stage counts are computed from the simulated instances before
rounding, so they need not equal the product of the rounded values of
$|\jobst|$ and $m+1$ shown in the table.

The SGAP test set varies the number of physical machines $m \in \{5,7,9\}$,
the lookahead horizon $\horizon \in \{1,3\}$, the scenario count $K \in \{50,100\}$,
and the random seed in $\{0,1\}$.
Table~\ref{tab:sgap_instance_sizes} reports representative first- and second-stage instance sizes.

\begin{table}[ht]
  \centering
  \caption{SGAP instance sizes, averaged over seeds.}
  \label{tab:sgap_instance_sizes}
  \begin{tabular}{ccrrr}
    \toprule
    $m$ & $\horizon$ & $|\jobst|$ & $\numcontvarsone$ & Avg.\ $\numcontvarstwo$ \\
    \midrule
    $5$ & $1$ & $8$  & $45$  & $95$  \\
    $5$ & $3$ & $8$  & $45$  & $117$ \\
    $7$ & $1$ & $14$ & $116$ & $228$ \\
    $7$ & $3$ & $14$ & $116$ & $270$ \\
    $9$ & $1$ & $22$ & $215$ & $428$ \\
    $9$ & $3$ & $22$ & $215$ & $495$ \\
    \bottomrule
  \end{tabular}

  \smallskip
  {\footnotesize The number of second-stage variables can differ across scenarios; the reported value is the average over scenarios.}
\end{table}

\section{DCAP Formulation and Instance Details}
\label{sec:dcap_formulation}

The DCAP \cite{ahmed2003scenario} involves $m$ resources, $n$ tasks, and $T$ time periods.
The first-stage decision is the capacity acquisition schedule $\{x_{it}\}$
for resources $i \in [m]$ and periods $t \in [T]$,
with fixed-charge acquisition costs:
\[
f_{it}(x_{it}) = \alpha_{it}\, x_{it} + \beta_{it}\, u_{it}, \qquad u_{it} = \begin{cases} 1 & \text{if } x_{it} > 0 \\ 0 & \text{otherwise,}\end{cases}
\]
where $\alpha_{it}, \beta_{it} \geq 0$ are variable and fixed cost coefficients
and $u_{it} \in \{0,1\}$ is the corresponding binary indicator.

For a given acquisition schedule $x$, scenario $\scenario \in [\scenarios]$, and period $t \in [T]$,
define the period-$t$ recourse function
\begin{mini}
{}{\sum_{i=0}^{m} \sum_{j=1}^{n} c_{ijt}^\scenario\, y_{ij}}{\label{eq:dcap_second_stage}}{Q_t^\scenario(x) = }
  \addConstraint{\sum_{j=1}^{n} d_{jt}^\scenario\, y_{ij}}{\le \sum_{\tau=1}^{t} x_{i\tau},}{\quad \forall\, i \in [m]}
  \addConstraint{\sum_{i=0}^{m} y_{ij}}{= 1,}{\quad \forall\, j \in [n]}
  \addConstraint{y_{ij}}{\in \{0,1\},}{\quad \forall\, i \in \{0,\ldots,m\},\, j \in [n].}
\end{mini}
Here $c_{ijt}^\scenario$ is the assignment cost in period $t$ under scenario~$\scenario$, and
$d_{jt}^\scenario$ is the corresponding processing requirement.
The cumulative capacity constraint ensures assignments in period $t$
do not exceed the installed capacity available through period $t$.
A dummy resource $i = 0$ with high cost ensures complete recourse.
The two-stage objective is:
\begin{equation}
\label{eq:dcap_obj}
\min_{(x,u) \in \X} \Bigl\{ \sum_{t=1}^T \sum_{i=1}^m (\alpha_{it}\, x_{it} + \beta_{it}\, u_{it}) + \sum_{\scenario=1}^{\scenarios} \scenprob
\sum_{t=1}^T Q_t^\scenario(x) \Bigr\}.
\end{equation}
The first-stage space is
$\mathbb{R}_+^{mT} \times \{0,1\}^{mT}$,
and the NACs enforce $x_{it,s} = \cvarcomp_{it}$ and $u_{it,s} = v_{it}$
for all $i, t, s$, yielding $2mT \cdot K$ constraints in total.

Each DCAP instance is specified by the number of resources $m$, the number of tasks $n$, and the number of time periods $T$.
The first stage contains $mT$ continuous acquisition variables and $mT$ binary fixed-charge indicators, for a total of $2mT$ first-stage variables.
NACs are imposed on both sets of first-stage variables, so the full NAC system contains $2mT K$ constraints.

The DCAP test set is drawn from the SIPLIB DCAP family and includes four base problems at multiple scenario counts, for a total of ten instances.
The instance name \texttt{dcap$mnT$} encodes the number of resources $m$, tasks $n$, and time periods $T$.
Table~\ref{tab:dcap_instances} lists the base problems, dimensions, and scenario counts used in the experiments.

\begin{table}[ht]
  \centering
  \caption{DCAP test instances.}
  \label{tab:dcap_instances}
  \begin{tabular}{lcccrrc}
    \toprule
    Base problem & $m$ & $n$ & $T$ & $\numcontvarsone$ & $\numcontvarstwo$ & Scenarios $K$ \\
    \midrule
    dcap233 & $2$ & $3$ & $3$ & $12$  & $27$  & $\{200, 300, 500\}$ \\
    dcap342 & $3$ & $4$ & $2$ & $12$  & $32$  & $\{200, 300, 500\}$ \\
    dcap468 & $4$ & $6$ & $8$ & $64$  & $240$ & $\{50, 200\}$ \\
    dcap575 & $5$ & $7$ & $5$ & $50$  & $210$ & $\{50, 200\}$ \\
    \bottomrule
  \end{tabular}
\end{table}

\section{Progressive Hedging for SMIPs}
\label{sec:ph_algorithm}

Progressive hedging \cite{rockafellar1991scenarios} decomposes a stochastic program by scenario and iteratively penalizes deviations from a common first-stage solution.
Its standard convergence guarantees require convexity, so we use the FW--PH variant \cite{boland2018combining}, which computes Lagrangian dual bounds for SMIPs by maintaining inner approximations of the convex hull of each scenario subproblem's feasible region.
Algorithm~\ref{alg:ph} summarizes the implementation, which is a specialization of the method presented in \cite{boland2018combining}.

For each scenario $\scenario \in [\scenarios]$, FW--PH maintains a finite set $V_\scenario \subseteq \cgsubproblem$ of generated scenario-feasible solutions.
The convex hull $\conv(V_\scenario)$ provides an inner approximation of $\conv(\cgsubproblem)$.
We initialize these sets using the same scenario solutions generated for the column-generation initialization described in Section~\ref{sec:algorithm}.
We initialize $\lagrmult_\scenario^{(1)} = 0$ for all $\scenario \in [\scenarios]$ and set $\bar{\vecx}^{(0)} = \startx$.

At iteration $t$, FW--PH first expands the inner approximation for each scenario by solving a linearized scenario subproblem over the full feasible set $\cgsubproblem$:
\begin{equation}
\label{eq:ph_milp}
(\hat{\vecx}_\scenario^{(t)}, \hat{\vecy}_\scenario^{(t)})
\in
\argmin_{(\vecx,\vecy) \in \cgsubproblem}
\Bigl\{
\bigl(\vecc + \lagrmult_\scenario^{(t)}\bigr)^\top \vecx
+ \vecq_\scenario^\top \vecy
\Bigr\},
\qquad \allscenarios.
\end{equation}
The generated solution is added to $V_\scenario$, thereby expanding the inner approximation $\conv(V_\scenario)$.
FW--PH then computes the scenario solution used in the consensus update by solving a penalized quadratic problem over the current inner approximation:
\begin{equation}
\label{eq:ph_qp}
(\vecx_\scenario^{(t)}, \vecy_\scenario^{(t)})
\in
\argmin_{(\vecx,\vecy) \in \conv(V_\scenario)}
\Bigl\{
\bigl(\vecc + \lagrmult_\scenario^{(t)}\bigr)^\top \vecx
+ \vecq_\scenario^\top \vecy
+ \frac{\gamma}{2}
\bigl\|\vecx - \bar{\vecx}^{(t-1)}\bigr\|_2^2
\Bigr\},
\qquad \allscenarios.
\end{equation}
The quadratic term penalizes deviation from the previous aggregate first-stage solution $\bar{\vecx}^{(t-1)}$.
The set $\conv(V_\scenario)$ is represented using convex combination coefficients $\vecmu \in
\mathbb{R}_+^{|V_\scenario|}$ satisfying $\mathbf{1}^\top \vecmu = 1$.
After solving the scenario subproblems, FW--PH updates the consensus point and the
multipliers:
\[
\bar{\vecx}^{(t)}
=
\sum_{\allscenarios} \scenprob\, \vecx_\scenario^{(t)},
\qquad
\lagrmult_\scenario^{(t+1)}
=
\lagrmult_\scenario^{(t)}
+
\gamma\Bigl(\vecx_\scenario^{(t)} - \bar{\vecx}^{(t)}\Bigr),
\quad \allscenarios.
\]
The algorithm stops when the maximum infinity-norm disagreement between any scenario first-stage solution
$\vecx_\scenario^{(t)}$ and the probability-weighted average $\bar{\vecx}^{(t)}$ is at most the tolerance $\epsilon_{\mathrm{FW\text{-}PH}}$, or when the work limit is reached.
Under standard assumptions, FW--PH converges to the optimal Lagrangian dual bound $\zLD = \zCG$.

\begin{algorithm}[t]
\caption{Frank--Wolfe Progressive Hedging for SMIPs \cite{boland2018combining}}
\label{alg:ph}
\begin{algorithmic}[1]
\State \textbf{Input:} Penalty parameter $\gamma > 0$, tolerance
       $\epsilon_{\mathrm{FW\text{-}PH}} > 0$, and initial feasible solution
       $\startx$.
\State Initialize $V_\scenario \gets \{(\startx,\hat{\vecy}_\scenario)\}$,
       $\lagrmult_\scenario^{(1)} \gets 0$ for all $\allscenarios$, and
       $\bar{\vecx}^{(0)} \gets \startx$.
\For{$t = 1, 2, \ldots$}
    \For{each $\scenario \in [\scenarios]$}
        \State Solve MILP~\eqref{eq:ph_milp} to obtain
               $(\hat{\vecx}_\scenario^{(t)},\hat{\vecy}_\scenario^{(t)})$.
        \State $V_\scenario \gets
               V_\scenario \cup
               \{(\hat{\vecx}_\scenario^{(t)},\hat{\vecy}_\scenario^{(t)})\}$.
        \State Solve QP~\eqref{eq:ph_qp} over $\conv(V_\scenario)$ to obtain
               $(\vecx_\scenario^{(t)},\vecy_\scenario^{(t)})$.
    \EndFor
    \State $\bar{\vecx}^{(t)} \gets
           \sum_{\allscenarios} \scenprob\, \vecx_\scenario^{(t)}$.
    \If{$\max_{\allscenarios}
         \|\vecx_\scenario^{(t)}-\bar{\vecx}^{(t)}\|_\infty
         \leq \epsilon_{\mathrm{FW\text{-}PH}}$}
        \State \textbf{break}
    \EndIf
    \State $\lagrmult_\scenario^{(t+1)} \gets
           \lagrmult_\scenario^{(t)}
           + \gamma(\vecx_\scenario^{(t)}-\bar{\vecx}^{(t)})$
           for all $\allscenarios$.
\EndFor
\end{algorithmic}
\end{algorithm}

\end{document}